\theoremstyle{plain}
   \newtheorem{theorem}{Theorem}[section]
   \newtheorem{lemma}[theorem]{Lemma}
   \newtheorem{corollary}[theorem]{Corollary}
   \newtheorem{conjecture}[theorem]{Conjecture}
\theoremstyle{definition}
\theoremstyle{remark}
 \newtheorem{remark}{Remark}[section]
\newcommand{\R}{\mathbb{R}}
\newcommand{\RR}{\mathcal{R}}
\newcommand{\Z}{\mathbb{Z}}
\newcommand{\B}{\mathcal{B}}
\newcommand{\cI}{\mathcal{I}}
\newcommand{\zone}{\mathcal{Z}}
\newcommand{\bm}[1]{{\boldsymbol{#1}}}
\def\0{\bm{0}}
\def\1{\bm{1}}
\def\be{\bm{e}}
\def\m{\bm{m}}
\def\bp{\bm{p}}
\def\bq{\bm{q}}
\def\bs{\bm{s}}
\def\bu{\bm{u}}
\def\bv{\bm{v}}
\def\bx{\bm{x}}
\def\bomega{\bm{\omega}}
\def\newop#1{\expandafter\def\csname #1\endcsname{\mathop{\rm
#1}\nolimits}}
\keywords{}
\subjclass[2000]{}
\begin{document}
\title[Ehrhart Unimodality and Ehrhart Positivity]{On the Relationship Between Ehrhart Unimodality and Ehrhart Positivity}

\author{Fu Liu}
\date{\today}
\address{Department of Mathematics, University of California, Davis, One Shields Avenue, Davis, CA, 95616, USA}
\email{fuliu@math.ucdavis.edu}

\author{Liam Solus}
\date{\today}
\address{Matematik, KTH, SE-100 44 Stockholm, Sweden}
\email{solus@kth.se}

\begin{abstract}
For a given lattice polytope, two fundamental problems within the field of Ehrhart theory are to (1) determine if its (Ehrhart) $h^\ast$-polynomial is unimodal and (2) to determine if its Ehrhart polynomial has only positive coefficients.  
The former property of a lattice polytope is known as Ehrhart unimodality and the latter property is known as Ehrhart positivity.  
These two properties are often simultaneously conjectured to hold for interesting families of lattice polytopes, yet they are typically studied in parallel.  
As to answer a question posed at the 2017 Introductory Workshop to the MSRI Semester on Geometric and Topological Combinatorics, the purpose of this note is to show that there is no general implication between these two properties in any dimension greater than two.  
To do so, we investigate these two properties for families of well-studied lattice polytopes, assessing one property where previously only the other had been considered.  
Consequently, new examples of each phenomena are developed, some of which provide an answer to an open problem in the literature. 
The well-studied families of lattice polytopes considered include zonotopes, matroid polytopes, simplices of weighted projective spaces, empty lattice simplices, smooth polytopes, and $s$-lecture hall simplices.
\end{abstract}

\maketitle
\thispagestyle{empty}

\section{Introduction}
\label{sec: introduction}
A subset $P\subset\R^n$ of $n$-dimensional real Euclidean space is called a \emph{(lattice) polytope} if it is the convex hull of finitely many \emph{lattice points} (i.e. points in $\Z^n$) that together span a $d$-dimensional affine subspace of $\R^n$.  
Lattice polytopes play a central role in geometric and algebraic combinatorics and algebraic geometry.  
In the former context, lattice polytopes are often associated to combinatorial and algebraic objects such that their geometry reflects known facts, and/or reveals new and interesting facts, about these objects.  
In the latter context, each lattice polytope serves as a ``polyhedral dictionary'' from which we can read the algebro-geometric properties of an associated \emph{toric variety}.  
Consequently, lattice polytopes amount to a large and diverse family of examples within algebraic geometry.  
In both fields of research, the number of lattice points within the \emph{$t^{th}$ dilate} of a lattice polytope $P$, $tP:=\{t\bp\in\R^n : \bp\in P\}$, provides information about the associated algebraic and geometric objects.  
The \emph{Ehrhart function} of a $d$-dimensional lattice polytope $P$ is the function $i(P;t):=|tP\cap\Z^n|$ for $t\in\Z_{\geq0}$.  
It is well-known \cite{E62} that $i(P;t)$ is a polynomial in $t$ of degree $d$ (called the \emph{Ehrhart polynomial of $P$}), and the \emph{Ehrhart series} of $P$ is the rational function
$$
\Ehr_p(z) := \sum_{t\geq0}i(P;t)z^t = \frac{h_0^*+h_1^*z+\cdots+h_d^*z^d}{(1-z)^{d+1}},
$$
where the coefficients $h^*_0,h^*_1,\ldots,h^*_d$ are all nonnegative integers \cite{S80}.  
The polynomial $h^*(P;z):=h_0^*+h_1^*z+\cdots+h_d^*z^d$ is called the \emph{$h^*$-polynomial} of $P$. 
In the field of \emph{Ehrhart theory}, the lattice point combinatorics of a polytope $P$ are studied using both its Ehrhart polynomial and its $h^\ast$-polynomial.  
Both $i(P;t)$ and $h^\ast(P;z)$ often reflect interesting properties of the underlying combinatorial structure of $P$, but the questions and techniques common to the study of each polynomial are drastically different.  
Consequently, although the Ehrhart polynomial and the $h^\ast$-polynomial may be analyzed simultaneously, they are often studied in parallel.  
In particular, two popular, and parallel, endeavors in Ehrhart theory are to analyze when a lattice polytope $P$ is \emph{Ehrhart positive} and when it is \emph{Ehrhart unimodal}.  

A lattice polytope $P$ is called \emph{Ehrhart positive} if all the coefficients of the Ehrhart polynomial $i(P;t)$ are positive rational numbers, and it is called \emph{Ehrhart unimodal} if its $h^\ast$-polynomial is \emph{unimodal}; i.e. for some $t\in[d]:=\{1, 2, \dots, d\}$ we have that 
$$
h_0^\ast\leq h_1^\ast\leq\cdots\leq h_t^\ast\geq\cdots\geq h_{d-1}^\ast\geq h_d^\ast.
$$
Both Ehrhart positivity and Ehrhart unimodality are popularly investigated properties with deep algebraic and geometric underpinnings.  
In fact, each property was recently the focus of its own survey article \cite{B16,L17}, and it is common to see both properties conjectured to hold for nice lattice polytopes (see for example \cite[Conjecture~2]{DHK09}).  
At the \emph{2017 MSRI Introductory Workshop to the Semester on Geometric and Topological Combinatorics}, the first author gave a talk on problems and progress in Ehrhart positivity and the second author gave an analogous talk on Ehrhart unimodality.  
Following these talks, A. Postnikov posed the question as to whether or not there exists any implication between the two problems.  
The purpose of the note is to answer this question in each dimension.  
In doing so, we also address this question for some major families of well-studied lattice polytopes.  

In the remainder of this note, we show by way of examples in each dimension greater than two that there is no general implication between Ehrhart positivity and Ehrhart unimodality.  
In doing so, we study the relationship between Ehrhart positivity and Ehrhart unimodality by way of the fundamental examples associated to each property.  
We determine whether or not classic examples of one property satisfy the other.  
In section~\ref{sec: polytopes that are ehrhart positive and ehrhart unimodal} we summarize the lattice polytopes that are known (or conjectured) to be both Ehrhart positive and Ehrhart unimodal.  
In section~\ref{sec: polytopes that are ehrhart positive but not ehrhart unimodal} we present lattice polytopes that are Ehrhart positive but not Ehrhart unimodal.  
In doing so, we provide an answer to an open problem posed in \cite{BR07}.
In section~\ref{sec: polytopes that are not ehrhart positive but are ehrhart unimodal} we present examples that are not Ehrhart positive but are Ehrhart unimodal.  
Finally, in section~\ref{sec: polytopes that are neither ehrhart positive nor ehrhart unimodal} we describe families of lattice polytopes that are neither Ehrhart positive nor Ehrhart unimodal.  
The examples considered here are all classic families of polytopes, including zonotopes, matroid polytopes, simplices of weighted projective spaces, empty lattice simplices, smooth polytopes, and $s$-lecture hall simplices.  

\subsection{Preliminaries}
\label{subsec: preliminaries}
Before we begin, we briefly catalogue some basic facts about Ehrhart theory that will be used in the remainder of this note.
Let $P\subset \R^n$ be a $d$-dimensional lattice polytope.  
The first important fact we need is that the Ehrhart polynomial of $P$ can be recovered from its $h^\ast$-polynomial $h^*(P;z) = \sum_{j=0}^d h^*_j z^j$ by way of the formula
\begin{equation}
i(P,t) = \sum_{j=0}^d h^*_j \binom{t + d -j}{d}.  \label{equ:h2e}
\end{equation}
Next, recall from the introduction that the coefficients of $h^\ast(P;z)$ are known to be nonnegative integers \cite{S80}.  
Going a step beyond this, some of the coefficients of $h^\ast(P;z)$ have very simply-stated formulae.
In particular, we know that
\[
h_0^\ast = 1,
\qquad
\mbox{and}
\qquad
h_1^\ast = |P\cap \Z^n| - (d+1).
\]
Finally, in the following we will utilize some nice implications that hold between properties of the roots of a univariate polynomial and the distribution of its coefficeints.  
A univariate polynomial is called \emph{real-rooted} if all of its roots are real numbers.  
It turns out that if this polynomial further has nonnegative coefficients then it is unimodal \cite[Theorem 1.2.1]{B89}.  
In the following, we will often use the fact that a given $h^\ast$-polynomial has only real-roots to recover Ehrhart unimodality.  
%
%

\section{Polytopes that are Ehrhart Positive and Ehrhart Unimodal}
\label{sec: polytopes that are ehrhart positive and ehrhart unimodal}
The major conjectures on Ehrhart positivity and Ehrhart unimodality are naturally aimed at positive results; i.e., they purport that a given family of lattice polytopes satisfies the desired property.  
Consequently, to identify families of lattice polytopes in each dimension that are both Ehrhart positive and Ehrhart unimodal, it suffices to compare the positive results in both fields and identify where they overlap.  
Moreover, substantially difficult conjectures on Ehrhart positivity are often stated in parallel to equally challenging conjectures on Ehrhart unimodality.  
In subsection~\ref{subsec: known families} we catalogue the families of lattice polytopes that are known to be both Ehrhart positive and Ehrhart unimodal. 
Then, in subsection~\ref{subsec: conjectured families}, we review which families of lattice polytopes are further conjectured to satisfy both properties.  
However, we first begin by assessing our question in dimension two; i.e.~the case of all \emph{lattice polygons}.  

\subsection{Dimension two:~the lattice polygons}
\label{subsec: dimension two}
Since the goal of this note is to assess the relationship (or lack thereof) between Ehrhart unimodality and Ehrhart positivity in each dimension, then it is natural to first consider our question in dimension two.  
In fact, dimension two turns out to be the only dimension in which there is a definitive relationship between these two properties!
A two-dimensional lattice polytope is often called a \emph{lattice polygon}.  
It follows from \emph{Pick's Theorem} \cite[Theorem 2.8]{BR07} that if $P\subset\R^2$ is a lattice polygon then
\[
i(P;t) = At^2+\frac{1}{2}Bt+1
\quad
\mbox{and}
\quad
h^\ast(P;z) = \left(A-\frac{B}{2}+1\right)z^2+\left(A+\frac{B}{2}-2\right)z+1,  
\]
where $A$ denotes the area of $P$ and $B$ denotes the number of lattice points on the boundary of $P$.  
It can be seen directly from these formulae that $P$ is both Ehrhart positive and Ehrhart unimodal.  
In particular, Ehrhart unimodality follows from the observation that $A\geq 1/2$ and $B\geq 3$. 
\begin{remark}
\label{rmk: higher dimensions}
Since lattice polygons are both Ehrhart positive and Ehrhart unimodal, in what remains we only consider examples in dimensions greater than two.
\end{remark}

\subsection{Known families}
\label{subsec: known families}
For dimensions greater than two, perhaps the simplest example of a lattice polytope that is both Ehrhart positive and Ehrhart unimodal is the \emph{standard $d$-simplex}, which is the convex hull 
$$
\Delta_d := \conv(\be_1,\ldots,\be_d,\0)\subset\R^d,
$$
where $\be_1,\ldots, \be_d$ denote the standard basis vectors and $\0$ denotes the origin in $\R^d$.  
It is well-known \cite[Chapter 2.3]{BR07} that 
\[
i(\Delta_d;t) = {t+d\choose d} 
\qquad
\mbox{and}
\qquad
h^\ast(\Delta_d;z) = 1,
\]
from which it is straight-forward to see that $\Delta_d$ is both Ehrhart positive and Ehrhart unimodal.  
A second famous example is the \emph{$d$-dimensional cross-polytope}, which is defined as the convex hull
$$
\diamondsuit_d :=\conv(\be_1,\ldots,\be_d,-\be_1,\ldots,-\be_d)\subset\R^d.
$$
The Ehrhart polynomial and the $h^\ast$-polynomial of $\diamondsuit_d$ are, respectively \cite[Chapter 2.5]{BR07},
$$
i(\diamondsuit_d;t) = \sum_{k=0}^d{d\choose k}{t-k+d\choose d}
\qquad
\mbox{and}
\qquad
h^\ast(\diamondsuit_d;z) = (z+1)^d.  
$$
Since $h^\ast(P;z)$ is seen to be real-rooted, the unimodality of its coefficients follows from the discussion in subsection~\ref{subsec: preliminaries}.  
At the same time, \cite[Exercise 4.61(b)]{S97} demonstrates that every zero of $i(\diamondsuit_d;t)$ has real part $-\frac{1}{2}$.  
Therefore, $i(\diamondsuit_d;t)$ is a product of polynomials of the form
$$
t+\frac{1}{2}
\qquad
\mbox{and}
\qquad
t^2+t+\frac{1}{4}+a^2,
$$
for some $a\in\R$,and so $\diamondsuit_d$ is also Ehrhart positive.

Finally, perhaps the most substantial family of lattice polytopes that are known to be both Ehrhart positive and Ehrhart unimodal are the lattice \emph{zonotopes}.  
A lattice polytope $\zone$ is called a \emph{zonotope} if it is the \emph{Minkowski sum} of a collection of line segments; i.e., $\zone$ is translation-equivalent to 
$$
\{\lambda_1 \bu_1 + \lambda_2 \bu_2 +\cdots+\lambda_m \bu_m \ : \ 1 \le \lambda_k \le 0, \  \forall 1 \le k \le m\},
$$
for some $\bu_1,\ldots,\bu_m \in\Z^n$.  
Zonotopes include a wide variety of lattice polytopes such as the $d$-dimensional unit cube $[0,1]^d$ and the \emph{regular permutahedron} \cite[Example 0.10]{Z12}.  
The former example is known to have Ehrhart polynomial $(t+1)^d$ and $h^\ast$-polynomial the \emph{$n^{th}$ Eulerian polynomial} $A_n(z)$ \cite[Chapter 2.2]{BR07}, and the latter example's Ehrhart polynomial has $i^{th}$ coefficient being the number of forests on $d+1$ vertices with $i$ edges \cite[Example 3.1]{S80}. 
More generally, zonotopes are seen to be Ehrhart positive by way of a combinatorial formula for the coefficients of $i(\zone;t)$ in terms of the vectors $\bu_1,\ldots,\bu_m$ \cite[Example 3.1]{S80}, 
It is also known that the $h^\ast$-polynomial of any zonotope is real-rooted (and unimodal) \cite[Theorem 1.2]{BJM16}.

\subsection{Conjectured families}
\label{subsec: conjectured families}
While zonotopes constitute the major family of lattice polytopes known to be both Ehrhart positive and Ehrhart unimodal, there do exist other large families of lattice polytopes that are conjectured to satisfy both conditions.  
One substantial family of such polytopes are the \emph{matroid polytopes}.  
Recall that a \emph{matroid} $M$ is a finite collection $\cI$ of subsets of $[d] := \{1,\ldots,d\}$ that satisfies the following three properties:
\begin{enumerate}
	\item $\emptyset\in \cI$,
	\item if $A\in \cI$ and $B\subseteq A$ then $B\in \cI$, and 
	\item if $A,B\in \cI$ and $|A| = |B|+1$ then there exists $i \in A\backslash B$ so that $B\cup\{i\}\in \cI$.  
\end{enumerate} 
The elements of $\cI$ are called the \emph{independent sets} of $M$ and the inclusion-maximal independent sets are called its \emph{bases}.  
If $\B$ denotes the collection of bases of a matroid $M$, then we defined the matroid polytope for $M$ to be the convex hull
$$
P(M) := \conv\left(\sum_{i\in B}\be_i : B\in \B\right)\subset \R^d.
$$
\begin{conjecture}
\cite[Conjecture 2]{DHK09}
\label{conj: deloera, haws, and koeppe}
For any matroid $M$, the matroid polytope $P(M)$ is both Ehrhart positive and Ehrhart unimodal. 
\end{conjecture}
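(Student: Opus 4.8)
The plan is to treat the two asserted properties separately, since the techniques available for Ehrhart positivity and for Ehrhart unimodality are quite different. For positivity, I would leverage the fact that $P(M)$ is a generalized permutohedron together with the valuative machinery of McMullen and Berline--Vergne; for unimodality, I would instead aim to establish that $h^\ast(P(M);z)$ is real-rooted, whence unimodality would follow from the discussion in subsection~\ref{subsec: preliminaries}.

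For the positivity half, the first step is to secure base cases: the uniform matroids $U_{k,n}$, whose matroid polytopes are the hypersimplices, have well-understood Ehrhart polynomials, so positivity can be checked there directly. The key structural input is that every face of $P(M)$ is itself a matroid polytope $P(M')$ for a matroid minor $M'$ of $M$, which makes $P(M)$ especially amenable to an inductive treatment on its face lattice. I would then write each Ehrhart coefficient using the McMullen--Berline--Vergne expansion
$$
i(P(M);t) = \sum_{F} \alpha(F)\,\vol(F)\,t^{\dim F},
$$
where $F$ ranges over the faces of $P(M)$ and $\alpha(F)$ is the Berline--Vergne coefficient attached to the normal cone of $F$. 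Since $\vol(F)>0$ and each face is again a matroid polytope, positivity of $i(P(M);t)$ would follow if one could show that the weighted sum of the $\alpha(F)$ over faces of each fixed dimension is positive. One natural route is to exploit Postnikov's Minkowski decomposition of $P(M)$ into dilated simplices (or hypersimplices) together with the additivity of the relevant valuation.

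For the unimodality half, I would first try to produce a nice triangulation of $P(M)$ --- ideally a regular, unimodular one --- so that $h^\ast(P(M);z)$ equals the $h$-polynomial of the triangulation, and then argue unimodality combinatorially. Failing a direct triangulation argument, the alternative is to set up a deletion--contraction recursion for $h^\ast(P(M);z)$ mirroring the matroid recursion relating $M$, $M\backslash e$, and $M/e$, and to show that the resulting family of polynomials forms an interlacing sequence, which would yield real-rootedness and hence unimodality.

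The hard part will be controlling the Berline--Vergne coefficients $\alpha(F)$ uniformly across all matroids. Unlike the case of zonotopes, there is no known closed combinatorial formula for the Ehrhart coefficients of a general matroid polytope, and the normal-cone geometry of $P(M)$ varies wildly with the combinatorics of $M$; consequently the cancellation needed to force positivity of each coefficient is delicate and resists a uniform argument. Likewise, matroid polytopes are not in general known to admit unimodular triangulations, and no deletion--contraction recursion for $h^\ast(P(M);z)$ with provable interlacing is currently available. It is precisely this absence of a manifestly positive (respectively, manifestly unimodal) combinatorial model that keeps the conjecture open; a successful proof would likely require a genuinely new structural formula for the Ehrhart data of $P(M)$ indexed by matroid invariants.
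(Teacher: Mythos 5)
The statement you were asked to prove is not a theorem of the paper at all: it is Conjecture~\ref{conj: deloera, haws, and koeppe}, quoted from \cite[Conjecture 2]{DHK09}, and the paper offers no proof of it --- immediately after stating it, the authors record that ``both aspects of this conjecture have remained elusive despite various attempts and partial results \cite{BValpha,ehrhartpos-gp-fpsac,KMR17}.'' Your submission, by its own concluding paragraph, is also not a proof: it is a survey of the two standard attack routes together with an accurate admission of exactly where each one breaks down. So the genuine gap is the entire core of the argument, on both halves, and you should present this as a research plan rather than as a proof.

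To name the gaps concretely: your positivity route --- expanding $i(P(M);t)=\sum_{F}\alpha(F)\vol(F)\,t^{\dim F}$ over faces $F$ with Berline--Vergne coefficients $\alpha(F)$, and exploiting that every face of a matroid polytope is again a matroid polytope --- is precisely the approach of the partial results the paper cites (Castillo--Liu \cite{BValpha,ehrhartpos-gp-fpsac}); the missing step, positivity of the relevant $\alpha(F)$ (or of the dimension-wise weighted sums) uniformly over all matroids, \emph{is} the open problem, and nothing in your outline supplies it. On the unimodality side there are two further soft spots: matroid polytopes are not known to admit regular unimodular triangulations in general, and even granting one, the identity between $h^\ast(P(M);z)$ and the $h$-polynomial of the triangulation does not by itself yield unimodality (nonnegative $h$-vectors of triangulations need not be unimodal; extra structure such as the Gorenstein property or flagness-type conditions is required), while no deletion--contraction recursion for $h^\ast(P(M);z)$ with a provable interlacing property is known. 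Finally, a decisive obstruction you should be aware of: subsequent to this paper, Ferroni exhibited matroids whose base polytopes have negative Ehrhart coefficients, so the positivity half of the conjecture is in fact false, and no proof along your proposed lines (or any other) can exist; only the unimodality half remains a viable target.
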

So far, both aspects of this conjecture have remained elusive despite various attempts and partial results \cite{BValpha,ehrhartpos-gp-fpsac,KMR17}. 
In general, families for which it is easy to prove Ehrhart unimodality may not be amenable to proofs of Ehrhart positivity (or vice versa).  
For instance, a family of lattice simplices known as the \emph{simplices for base-$r$ numeral systems}, whose combinatorics are tied to representations of integers in the base-$r$ numeral system, were recently shown to have real-rooted (and therefore unimodal) $h^\ast$-polynomials \cite[Theorem 4.5]{S17}.  
Given an integer $r\geq 2$, the \emph{base-$r$ $d$-simplex} is defined to be 
$$
\B_{(r,d)} := \conv\left(\be_1,\ldots,\be_d,-\sum_{k=1}^{d}(r-1) r^{k-1} \be_{k}\right)\subset\R^d.
$$
Based on observed data, the author of \cite{S17}, further conjectured that such simplices also satisfy Ehrhart positivity.
\begin{conjecture}
\cite[Section 5]{S17}
For $r\geq 2$ and $d\geq 1$, the base-$r$ $d$-simplex is Ehrhart positive.  
\end{conjecture}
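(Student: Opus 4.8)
The plan is to reduce the whole problem to an explicit description of the $h^\ast$-polynomial and then to control the sign pattern produced by the transformation~\eqref{equ:h2e}. First I would make the combinatorics of $h^\ast(\B_{(r,d)};z)$ completely explicit by computing the half-open fundamental parallelepiped of the cone over $\B_{(r,d)}$. Writing a lattice point there in terms of the generators $(\be_1,1),\dots,(\be_d,1)$ and $\left(-\sum_{k=1}^{d}(r-1)r^{k-1}\be_k,\,1\right)$ and solving the resulting fractional-part conditions, one finds that the lattice points are indexed by $n\in\{0,1,\dots,r^d-1\}$ (via $\lambda_0=n/r^d$), and that the height of the point indexed by $n$ equals
\[
\frac{n}{r^d}+\sum_{j=1}^{d}\left\{\frac{(r-1)n}{r^{j}}\right\}=\frac{s_r\big((r-1)n\big)}{r-1},
\]
where $s_r$ is the base-$r$ digit sum; equivalently this height is the number $B(n)$ of borrows in the base-$r$ subtraction $rn-n$. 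Hence $h^\ast_k=\#\{\,n<r^d:B(n)=k\,\}$, which presents $h^\ast(\B_{(r,d)};z)$ as the generating polynomial of a borrow statistic read off the digit string of $n$ by a finite automaton; this is the transfer-matrix structure underlying the real-rootedness in \cite[Theorem 4.5]{S17}. As a check, $r=2$ gives $B(n)=s_2(n)$ and $h^\ast(\B_{(2,d)};z)=(1+z)^d$, so $\B_{(2,d)}$ shares its Ehrhart polynomial with the cross-polytope $\diamondsuit_d$ and is already Ehrhart positive; with the two-dimensional case covered by Pick's Theorem, only $r\ge 3$, $d\ge 3$ remains. I would also note that $\0$ lies in the interior of $\B_{(r,d)}$ with $\sum_i\lambda_i\bv_i=\0$ for weights $(1,r-1,(r-1)r,\dots,(r-1)r^{d-1})$, so $\B_{(r,d)}$ is a weighted-projective-space simplex with geometric weights, connecting this case to that family.

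Next I would feed this description into~\eqref{equ:h2e}. Expanding each $\binom{t+d-k}{d}$ in powers of $t$ shows that the coefficient of $t^m$ in $i(\B_{(r,d)};t)$ equals $\tfrac{(-1)^{d-m}}{d!}\sum_{k}h^\ast_k\,e_{d-m}(k-1,k-2,\dots,k-d)$, where $e_\ell$ is the elementary symmetric polynomial of degree $\ell$. Thus Ehrhart positivity is precisely the family of inequalities
\[
(-1)^{d-m}\sum_{k=0}^{d}h^\ast_k\,e_{d-m}(k-1,k-2,\dots,k-d)>0,\qquad 0\le m\le d,
\]
a statement that certain signed moments of the borrow distribution $(h^\ast_k)$ all carry the predicted sign. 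I would attack these two ways: (a) show every root of $i(\B_{(r,d)};t)$ has negative real part, which suffices since a real polynomial with positive leading coefficient and roots in the open left half-plane factors into terms $t+a$ and $t^2+bt+c$ with $a,b,c>0$; or (b) compute the moments in closed form as polynomials in $r$ and show these have nonnegative coefficients. Route (b) looks most promising, because the transfer-matrix presentation makes $\sum_d h^\ast(\B_{(r,d)};z)\,x^d$ rational, so the numbers $\sum_k h^\ast_k\binom{k}{\ell}$ (the Taylor coefficients of $h^\ast(\B_{(r,d)};1+u)$) and hence the Ehrhart coefficients can be extracted as explicit rational expressions in $r$ and $d$; alternatively the weighted-projective structure gives Dedekind-sum formulas for the same coefficients.

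The main obstacle is the sign cancellation in the displayed inequalities: the quantities $e_{d-m}(k-1,\dots,k-d)$ alternate in sign as $k$ runs over $\{0,\dots,d\}$, so positivity is a genuine cancellation phenomenon, not a coefficientwise bound. In particular it cannot follow from real-rootedness (equivalently, unimodality) of $h^\ast$ alone --- that is exactly the implication this paper shows does not exist --- so the fine arithmetic of the borrow statistic must enter essentially. I expect the crux to be establishing the exact moment identities (or the corresponding half-plane root bounds) by induction on $d$ through the transfer matrix of the borrow automaton, the delicate point being to propagate the sign information across the $r^d-1$ contributions without losing the lower-order terms. The base cases $r=2$ and $d\le 2$, together with small explicit checks such as $h^\ast(\B_{(r,2)};z)=1+\tfrac{(r+2)(r-1)}{2}z+\tfrac{r(r-1)}{2}z^2$ and $h^\ast(\B_{(3,3)};z)=1+9z+13z^2+4z^3$, whose Ehrhart polynomials $\tfrac{r^2}{2}t^2+\tfrac{r+2}{2}t+1$ and $\tfrac92 t^3+\tfrac72 t^2+4t+1$ are visibly positive, give evidence that the moment identities are clean enough for such an induction to close.
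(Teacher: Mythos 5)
This statement is not proved in the paper at all: it appears there as an open conjecture, quoted from \cite{S17}, and the paper offers no argument for it. So there is no ``paper proof'' to compare yours against, and the bar your write-up must clear is that of a complete, self-contained proof of an open problem. It does not clear it. Your preliminary reductions are sound: the fundamental-parallelepiped computation presenting $h^\ast_k$ as the number of $n\in\{0,\ldots,r^d-1\}$ with a prescribed digit/borrow statistic is consistent with \cite{S17}; the identity $h^\ast(\B_{(2,d)};z)=(1+z)^d$ genuinely disposes of $r=2$, since all roots then lie on the unit circle and the argument of Theorem~\ref{thm: ehrhart positive but not ehrhart unimodal q} (via \cite{rodriguez} or \cite{stanleycycleperm}) applies; Pick's theorem disposes of $d\le 2$; and your small-case polynomials check out (e.g.\ $i(\B_{(3,3)};t)=\tfrac92 t^3+\tfrac72 t^2+4t+1$ is correct). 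But the entire content of the conjecture --- the case $r\ge 3$, $d\ge 3$ --- is left as a program. You correctly rewrite positivity as a family of signed moment inequalities, then propose two routes, (a) a left-half-plane bound on the roots of $i(\B_{(r,d)};t)$ and (b) closed-form moment computations, and carry out neither; ``I expect the crux to be establishing the exact moment identities\dots by induction on $d$'' is an accurate diagnosis of where the difficulty lives, not a proof of anything.

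It is also worth being explicit that the one positivity engine available in this paper cannot be transplanted to the remaining cases, so route (a) has no known mechanism behind it. Theorem~\ref{thm: ehrhart positive but not ehrhart unimodal q} works because Payne's $h^\ast$-polynomials have all their zeros on the unit circle, which forces every root of the Ehrhart polynomial to have real part $-\tfrac12$. For $r\ge 3$ the polynomial $h^\ast(\B_{(r,d)};z)$ is real-rooted with nonnegative coefficients, so its roots are negative reals; if they all lay on the unit circle they would all equal $-1$, forcing $h^\ast=(1+z)^d$, which your own example $h^\ast(\B_{(3,3)};z)=1+9z+13z^2+4z^3$ (not palindromic) rules out. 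So a proof along route (a) would need a genuinely new root-location theorem, and route (b) faces exactly the sign cancellation you identify --- which, as this paper shows, cannot be resolved by real-rootedness or unimodality of $h^\ast$ alone. As it stands, you have a correct reduction plus a plausible research plan, i.e.\ the conjecture restated in moment form, not a proof.
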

\section{Polytopes that are Ehrhart Positive but not Ehrhart Unimodal}
\label{sec: polytopes that are ehrhart positive but not ehrhart unimodal}
In this section, we present lattice polytopes in each dimension greater than two that are Ehrhart positive but not Ehrhart unimodal.  
To start, we define for every weakly increasing vector of positive integers $\bq = (q_1,\ldots,q_d)\in\R^d$ a lattice $d$-simplex 
\[
\Delta_{(1,\bq)} := \conv(\be_1,\ldots,\be_d, -\bq)\subset\R^d.
\]
These lattice simplices have been studied extensively from the perspective of Ehrhart unimodality \cite{BD16,BDS16,P08,S17}.  
For instance, when $\bq = ((r-1),(r-1)r,\ldots,(r-1)r^{d-1})$ for some $r\geq 2$, then $\Delta_{(1,\bq)}$ is the base-$r$ $d$-simplex $\B_{(r,d)}$ described in section~\ref{sec: polytopes that are ehrhart positive and ehrhart unimodal}.  
Moreover, in \cite{P08} it is shown that for special choices of $\bq$, the $h^\ast$-polynomial of $\Delta_{(1,\bq)}$ is non-unimodal.  
These examples refuted (in all dimensions greater than $5$) the conjecture of Hibi \cite{H92} that every Gorenstein lattice polytope has a unimodal $h^\ast$-polynomial.
\begin{theorem}
\cite{P08}
\label{thm: paynes theorem}
Let $r\geq0$, $s\geq 3$, and $k\geq r+2$ be integers.  If 
$$
\bq = (q_1,\ldots,q_d)=(\underbrace{1,1,\ldots,1}_{sk-1 \text{ times}},\underbrace{s,s,\ldots,s}_{r+1 \text{ times}}),
$$
then 
$$
h^\ast(\Delta_{(1,\bq)};z) = (1+z^k+z^{2k}+\cdots +z^{(s-1)k})(1+z+z^2+\cdots +z^{k+r}).
$$
Therefore, $\Delta_{(1,\bq)}$ is not Ehrhart unimodal.
\end{theorem}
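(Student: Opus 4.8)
The plan is to compute $h^\ast(\Delta_{(1,\bq)};z)$ directly from the half-open fundamental parallelepiped of the cone over the simplex, and then to read off non-unimodality from the resulting product. Homogenizing the vertices as $(\be_1,1),\dots,(\be_d,1),(-\bq,1)$, the $h^\ast$-polynomial is the generating function, graded by the last coordinate, of the lattice points of
$$\Pi = \left\{\lambda_0(-\bq,1) + \sum_{i=1}^d \lambda_i(\be_i,1) \ :\ 0 \le \lambda_i < 1\right\}.$$
I would first show that such a lattice point is determined by $\lambda_0$ alone: the $j$-th coordinate condition forces $\lambda_j = \{\lambda_0 q_j\}$, and a short computation rewrites the height $\sum_{i=0}^d\lambda_i$ as $\lambda_0 w - \sum_{j=1}^d \lfloor \lambda_0 q_j\rfloor$, where $w := 1 + \sum_{j=1}^d q_j$. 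Integrality of the height then forces $\lambda_0 w \in \Z$, so the lattice points of $\Pi$ are indexed by $b \in \{0,1,\dots,w-1\}$ via $\lambda_0 = b/w$, each having height $h(b) = b - \sum_{j=1}^d \lfloor b q_j/w\rfloor$; consequently $h^\ast(\Delta_{(1,\bq)};z) = \sum_{b=0}^{w-1} z^{h(b)}$.

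Specializing to the given $\bq$, one has $w = 1 + (sk-1) + s(r+1) = s(k+r+1)$; I would set $n := k+r+1$, so $w = sn$. For $0 \le b < w$ the $sk-1$ coordinates equal to $1$ contribute $\lfloor b/w\rfloor = 0$, while each of the $r+1$ coordinates equal to $s$ contributes $\lfloor bs/w\rfloor = \lfloor b/n\rfloor$, giving $h(b) = b - (r+1)\lfloor b/n\rfloor$. Writing $b = an + t$ with $0 \le t \le n-1$ and $a = \lfloor b/n\rfloor \in \{0,\dots,s-1\}$, this collapses to $h(b) = a(n-r-1) + t = ak + t$ since $n - r - 1 = k$. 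Summing over the product range of $(a,t)$ then factors the generating function as
$$h^\ast(\Delta_{(1,\bq)};z) = \left(\sum_{a=0}^{s-1} z^{ak}\right)\left(\sum_{t=0}^{k+r} z^{t}\right),$$
which is exactly the claimed formula.

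Finally, I would deduce non-unimodality by inspecting the coefficients of this product, where the coefficient of $z^m$ counts the pairs $(a,t)$ with $m = ak+t$, $0 \le a \le s-1$, and $0 \le t \le k+r$. Since $k \ge r+2$ gives $k+r < 2k$, at most two consecutive values of $a$ can contribute, so every coefficient is $1$ or $2$, and it suffices to exhibit a valley. For the exponents $k < 2k-1 < 2k$ (legitimate as $k \ge 2$), the coefficient of $z^k$ is $2$ (from $(a,t)=(0,k)$ and $(1,0)$), the coefficient of $z^{2k}$ is $2$ (from $(1,k)$ and $(2,0)$, using $s \ge 3$ so that $a=2$ is allowed), while the coefficient of $z^{2k-1}$ is $1$, as $(1,k-1)$ contributes but $(0,2k-1)$ fails precisely because $2k-1 > k+r$ when $k \ge r+2$. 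Thus $h^\ast_k > h^\ast_{2k-1} < h^\ast_{2k}$, contradicting unimodality. I expect the main obstacle to be the bookkeeping in the height computation, namely verifying that the floor terms separate cleanly into $(r+1)\lfloor b/n\rfloor$ and that $h(b)$ reduces to $ak+t$, since this is where the precise shape of $\bq$ is used; the non-unimodality is then a finite check in which the hypotheses $s \ge 3$ and $k \ge r+2$ enter exactly as above.
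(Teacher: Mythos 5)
Your proof is correct. Note that the paper itself gives no argument for this statement---it is quoted from Payne's paper \cite{P08}---so the only meaningful comparison is with Payne's original proof. Payne obtains these examples as illustrations of his general structural theorem, which expresses the $h^\ast$-polynomial of a lattice polytope in terms of a lattice triangulation of its boundary together with box polynomials attached to the faces; the non-unimodal simplices are engineered to make that decomposition explicit. Your route is instead the direct, self-contained computation over the half-open fundamental parallelepiped: the bijection between lattice points of $\Pi$ and residues $b\in\{0,\ldots,w-1\}$ via $\lambda_0=b/w$, the height function $h(b)=b-\sum_j\lfloor bq_j/w\rfloor$, and the clean reduction $h(an+t)=ak+t$ coming from $w=s(k+r+1)$ and $n-r-1=k$. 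This is exactly the normalized-volume parametrization of $h^\ast(\Delta_{(1,\bq)};z)$ used in the later works the authors cite (e.g. \cite{BDS16}, \cite{S17}), and it is more elementary than Payne's triangulation machinery, at the cost of being special to simplices of this form rather than yielding a general decomposition theorem. All steps check out: integrality of the height does force $\lambda_0 w\in\Z$; the count $w=1+(sk-1)+s(r+1)=s(k+r+1)$ is right; and the final coefficient inspection ($h^\ast_k=2$, $h^\ast_{2k-1}=1$, $h^\ast_{2k}=2$, with $k<2k-1$ guaranteed by $k\geq r+2\geq 2$, the vanishing of the $(0,2k-1)$ and $(0,2k)$ contributions guaranteed by $k\geq r+2$, and $2k\leq\deg h^\ast=sk+r$ guaranteed by $s\geq3$) invokes both hypotheses exactly where they are needed, so the exhibited valley genuinely contradicts unimodality.
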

On the other hand, we can show that for every $\bq$ as in Theorem~\ref{thm: paynes theorem} the simplex $\Delta_{(1,\bq)}$ is Ehrhart positive.  
This constitutes a new class of Ehrhart positive lattice polytopes, and this is the first such class of polytopes that are known to be Ehrhart positive but not Ehrhart unimodal.  
\begin{theorem}
\label{thm: ehrhart positive but not ehrhart unimodal q}
For integers $r\geq0$, $s\geq 3$, and $k\geq r+2$, let $\Delta_{(1,\bq)}$ be defined as in Theorem~\ref{thm: paynes theorem}.  Then $\Delta_{(1,\bq)}$ is Ehrhart positive.  
\end{theorem}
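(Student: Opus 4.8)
The plan is to compute $i(\Delta_{(1,\bq)};t)$ explicitly from Payne's factorization of the $h^\ast$-polynomial and then to prove positivity of its coefficients after a symmetrizing substitution. Throughout set $d = sk + r$, the dimension of $\Delta_{(1,\bq)}$. Since $h_j^\ast$ is the coefficient of $z^j$ in $(1 + z^k + \cdots + z^{(s-1)k})(1 + z + \cdots + z^{k+r})$, it equals the number of pairs $(a,b)$ with $0 \le a \le s-1$, $0 \le b \le k + r$, and $ak + b = j$. Substituting this description into \eqref{equ:h2e} and evaluating the resulting inner sum over $b$ by the hockey-stick identity $\sum_{b=0}^{N}\binom{M-b}{d} = \binom{M+1}{d+1} - \binom{M-N}{d+1}$, I would arrive at the closed form
\begin{equation*}
i(\Delta_{(1,\bq)};t) \;=\; \sum_{c=1}^{s}\binom{t + ck + r + 1}{d+1} \;-\; \sum_{c=0}^{s-1}\binom{t + ck}{d+1},
\end{equation*}
which is the computational engine for the rest of the argument.

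Next I would use that Payne's $h^\ast$-polynomial is palindromic of degree $d$ (being a product of two palindromic factors), so that $\Delta_{(1,\bq)}$ is reflexive and its Ehrhart polynomial satisfies $i(\Delta_{(1,\bq)};t) = (-1)^d\, i(\Delta_{(1,\bq)};-1-t)$. Hence the centered polynomial $\widetilde{\imath}(u) := i(\Delta_{(1,\bq)};u - \tfrac12)$ is even when $d$ is even and odd when $d$ is odd. Because $u = t + \tfrac12$ and every power $(t+\tfrac12)^{\ell}$ has positive coefficients, it suffices to prove that all coefficients $c_\ell := [u^\ell]\,\widetilde{\imath}(u)$ are nonnegative: the leading coefficient $c_d$ is the normalized volume and is strictly positive, so it contributes strictly to every coefficient of $i(\Delta_{(1,\bq)};t)$ and forces each of them to be positive. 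This reduces Ehrhart positivity to the nonnegativity of the coefficients of a single even (or odd) polynomial.

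To analyze $\widetilde{\imath}(u)$ I would use the palindromy to rewrite \eqref{equ:h2e} as $i(\Delta_{(1,\bq)};t) = \sum_{p} h^\ast_p\binom{t+p}{d}$ and then pass to centered coordinates. Pairing the $c = s$ term with the $c = 0$ term in the closed form gives, again by hockey-stick, the baseline $\sum_{p=0}^{d}\binom{t+p}{d}$, which in centered coordinates equals $\tfrac{2}{(d+1)!}$ times the even (or odd) part of $\prod_{j=0}^{d}(u + j + \tfrac12)$, a polynomial with positive coefficients; thus the baseline alone is positive term by term. Writing each $\binom{u-1/2+p}{d}$ as a product of linear factors $u + (p - \tfrac12 - i)$, expanding by the shift formula for elementary symmetric polynomials, and using $h^\ast_p = h^\ast_{d-p}$ to kill odd moments, the coefficient of $u^{d-j}$ in $\widetilde{\imath}(u)$ becomes $C_j/d!$ with the alternating sum
\begin{equation*}
C_j \;=\; \sum_{a \ge 0} (-1)^{a}\,\bigl|e_{2a}(\delta)\bigr|\,\binom{d-2a}{\,j-2a\,}\,M_{j-2a},
\end{equation*}
where $M_{2b} = \sum_{p} h^\ast_p\,(p - \tfrac{d}{2})^{2b}$ are the nonnegative even moments of the $h^\ast$-vector and $e_{2a}(\delta)$ are the elementary symmetric functions of the half-integers $\delta \in \{\pm\tfrac12,\pm\tfrac32,\ldots,\pm\tfrac{d-1}{2}\}$ (with the evident integer analogue when $d$ is odd).

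The main obstacle is the nonnegativity of these alternating sums $C_j$. The leading ($a=0$) term $\binom{d}{j}M_j$ is positive and dominant, but the remaining terms alternate in sign and carry the large elementary symmetric numbers $|e_{2a}(\delta)|$, so $C_j \ge 0$ is not visible termwise — indeed individual symmetrized binomials genuinely contribute negatively to the central coefficients of $\widetilde{\imath}$. The heart of the proof is thus a uniform estimate showing that, for the specific double-block $h^\ast$-vector at hand, the $a=0$ term outweighs the alternating tail for every $j$. The structure of $\bq$ — in particular the separation $k \ge r+2$, which makes the support of $h^\ast$ a union of short, well-separated blocks of multiplicity at most two — should be exactly what keeps the moments $M_{2b}$ concentrated enough at the extreme indices $p \approx 0, d$ (where $h^\ast_p = 1$) to secure this domination. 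Making this inequality precise and uniform in $r$, $s$, $k$, and $j$ is the crux of the argument.
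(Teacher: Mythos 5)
Your reduction steps are fine as far as they go: the closed form for $i(\Delta_{(1,\bq)};t)$ via the hockey-stick identity is correct, the $h^\ast$-polynomial is indeed palindromic of degree $d = sk+r$, so the functional equation $i(\Delta_{(1,\bq)};t) = (-1)^d i(\Delta_{(1,\bq)};-1-t)$ holds, and it is true that nonnegativity of the coefficients of the centered polynomial $\widetilde{\imath}(u) = i(\Delta_{(1,\bq)};u-\tfrac12)$ (together with positivity of its leading coefficient) implies Ehrhart positivity. But the proposal then stops exactly where the proof has to happen: the nonnegativity of the alternating sums $C_j$ is never established. You state yourself that the inequality ``is not visible termwise'' and that making the domination of the $a=0$ term over the alternating tail ``precise and uniform in $r$, $s$, $k$, and $j$ is the crux of the argument.'' An argument whose crux is an unproven uniform estimate is not a proof; nothing in the proposal uses the hypotheses $s\geq 3$, $k\geq r+2$ in a way that would plausibly close this gap, and the heuristic about moment concentration is not a substitute for the estimate.

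The missing ingredient is a known theorem that makes the whole moment analysis unnecessary, and it is the route the paper takes. Payne's $h^\ast$-polynomial factors as
\begin{equation*}
h^\ast(\Delta_{(1,\bq)};z) \;=\; \frac{z^{sk}-1}{z^{k}-1}\cdot\frac{z^{k+r+1}-1}{z-1},
\end{equation*}
so every one of its zeros is a root of unity, hence lies on the unit circle. Since $\deg h^\ast = d$ equals the dimension, the theorem of Rodriguez-Villegas \cite{rodriguez} (or Theorem 3.2 of \cite{stanleycycleperm}) applies and yields that every zero of $i(\Delta_{(1,\bq)};t)$ has real part $-\tfrac12$. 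This is precisely the statement that your centered polynomial $\widetilde{\imath}(u)$ has all zeros purely imaginary, so that $i(\Delta_{(1,\bq)};t)$ is a positive multiple of a product of factors $t+\tfrac12$ and $t^2+t+\tfrac14+a^2$ with $a\in\R$ --- each with positive coefficients --- exactly as in the cross-polytope argument of the paper. In other words, the root-location theorem delivers, in one step, the coefficient nonnegativity of $\widetilde{\imath}(u)$ that your proposal leaves as an open inequality; without invoking it (or proving an equivalent statement), the proposal does not establish the theorem.
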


\begin{proof}
	Notice that the zeros of $h^*(\Delta_{(1,\bq)}; z)$ are all on the unit circle $\{ z \in {\mathbb C} : |C|=1\}$ of the complex plane. It then follows from the main result in \cite{rodriguez} or Theorem 3.2 of \cite{stanleycycleperm} that each zero of $i(\Delta_{(1,\bq)}; t)$ has real part $\displaystyle -\frac{1}{2}.$ Therefore, the conclusion follows from the same discussion we give for cross-polytopes in subsection \ref{subsec: known families}.
\end{proof}

The proof of Theorem~\ref{thm: ehrhart positive but not ehrhart unimodal q} uses the same technique used to prove Ehrhart positivity of the $d$-dimensional cross-polytope $\Diamond_d$, discussed in Section~\ref{sec: polytopes that are ehrhart positive and ehrhart unimodal}.  
This special technique for proving Ehrhart positivity is the focus of an open problem posed in \cite{BR07}.  
In particular, Theorem~\ref{thm: ehrhart positive but not ehrhart unimodal q} provides one answer to \cite[Open Problem 2.43]{BR07}.

\subsection{Low dimensions}
\label{subsec: low dimensions}
As stated in section~\ref{sec: introduction}, the goal of this note is to assess the relationship between Ehrhart unimodality and Ehrhart positivity in each dimension greater than two.  
Since Theorems~\ref{thm: paynes theorem} and \ref{thm: ehrhart positive but not ehrhart unimodal q} only covers dimensions greater than five, it remains to identify examples of lattice polytopes that are Ehrhart positive but not Ehrhart unimodal in dimensions $3,4,$ and $5$. 
For these three dimensions, we then consider the \emph{Reeve's tetrahedron}, a well-known $3$-dimensional lattice simplex defined as follows:
Given a positive integer $h\geq1$ define the Reeve's tetrahedron
$$
\RR_h :=\conv((0,0,0),(1,0,0),(0,1,0),(1,1,h))\subset\R^3.
$$
It is well-known \cite[Example 3.22]{BR07} that the Reeve's tetrahedron $R_h$ has Ehrhart polynomial
$$
i(\RR_h;t) = \frac{h}{6}t^3+t^2+\left(\frac{12-h}{6}\right)t+1,
$$
and $h^\ast$-polynomial 
$$
h^\ast(\RR_h;z) = 1+(h-1)z^2.  
$$
In particular, for $2\leq h\leq11$ the Reeve's tetrahedron $\mathcal{R}_h$ is Ehrhart positive but not Ehrhart unimodal.  
These examples settle the question in dimension $3$.  
We can then lift this example into dimensions $4$ and $5$ by way of \emph{lattice pyramids} over $\mathcal{R}_h$.  
If $P\subset\R^n$ is a lattice polytope, then the \emph{lattice pyramid} over $P$ is the polytope  
\[
\Pyr(P) :=\conv(P\times\{0\},\be_{n+1})\subset\R^{n+1}.
\]
A well-known fact in Ehrhart theory is that $h^\ast(P;z) = h^\ast(\Pyr(P);z)$ \cite{B06}.  
We let $\Pyr^k(P)$ denote the $k$-fold pyramid over the lattice polytope $P$.
Using the software {\tt Polymake} \cite{Polymake} one can quickly compute that the four and five dimensional lattice pyramids $\Pyr(\mathcal{R}_6)$ and $\Pyr^2(\mathcal{R}_6)$ are both Ehrhart positive.  
This provides the remaining two examples needed to complete our objective in this section.
\begin{remark}
\label{rmk: summarizing}
The lattice polytopes $\mathcal{R}_6$, $\Pyr(\mathcal{R}_6)$, $\Pyr^2(\mathcal{R}_6)$, and those identified in Theorems~\ref{thm: paynes theorem} and \ref{thm: ehrhart positive but not ehrhart unimodal q} collectively demonstrate that in each dimension greater than two there exist lattice polytopes that are Ehrhart positive but not Ehrhart unimodal.  
\end{remark}
While our low-dimensional examples may suggest that the lattice pyramid operation preserves Ehrhart postivity (in addition to Ehrhart unimodality), we will see in the coming two sections that in fact quite the opposite is true.  


\section{Polytopes that are not Ehrhart Positive but are Ehrhart Unimodal}
\label{sec: polytopes that are not ehrhart positive but are ehrhart unimodal}
In this section, we demonstrate that there exist families of lattice polytopes that are Ehrhart positive but not Ehrhart unimodal in each dimension greater than two.  
We begin by showing, in subsection~\ref{subsec: smooth polytopes}, that there exists a smooth polytope in each dimension greater than six that is Ehrhart unimodal but not Ehrhart positive. 
Then, in subsection~\ref{subsec: s-lecture hall simplices}, we study Ehrhart positivity for the $s$-lecture hall simplices, which are a well-studied family of Ehrhart unimodal lattice polytopes \cite{Sav16}. 
We observe that in every dimension greater than two there are infinitely many $s$-lecture hall simplices that are not Ehrhart positive.  

\subsection{Smooth polytopes}
\label{subsec: smooth polytopes}
A $d$-dimensional lattice polytope $P$ is \emph{smooth} if every vertex of $P$ is contained in precisely $d$ edges and the primitive edge directions form a lattice basis for $\Z^d$.  
In \cite{CFNP17} the authors used the concept of \emph{chiseling} smooth lattice polytopes to obtain smooth polytopes with negative Ehrhart coefficients.  
Let $P$ be a smooth lattice polytope of dimension $d$ with vertex set $\Vert(P)$.  
Suppose that $\bv$ is a vertex of $P$ with primitive edge directions $\bu_1,\ldots,\bu_d$, and suppose also that there is an integer $b\in\Z_{>0}$ such that for all $i \in[n]$ the lattice point $\bv+b\bu_i$ is in $P$ but is not a vertex of $P$.  
The \emph{chiseling off of the vertex $\bv$ at distance $b$} from $P$ is the polytope
$$
P^\prime = \conv\left( (\Vert(P) \setminus \{\bv\})\cup\{\bv+b\bu_1,\ldots,\bv+b\bu_d\}\right).
$$
For each edge $e$ of $P$ define the \emph{lattice edge length} of $e$ to be $\ell(e) := |e\cap\Z^d|+1$, and let $\ell_P :=\min(\ell(e) : e \mbox{ is an edge of $P$})$.  
For any integer $b<\frac{\ell_P}{2}$, we can define the \emph{full chiseling} of $P$ to be the smooth polytope $\ch(P,b)$ produced by chiseling every vertex of $P$ at distance $b$.  
Using the theory of \emph{half-open decompositions} of lattice polytopes and the results of \cite{CFNP17}, we can find a smooth lattice polytope in each dimension greater than six that is Ehrhart unimodal but not Ehrhart positive.

For a $d$-dimensional lattice polytope $P$, let $P = P_1\cup\cdots\cup P_m$ be a \emph{decomposition} of $P$ into lattice polytopes $P_1,\ldots, P_m$; i.e., every $P_i$ is a $d$-dimensional lattice polytope such that $P_i$ does not intersect the relative interior of $P_j$ for any $j\neq i$.  
We say that a point $\bomega\in\R^d$ is in \emph{general position} with respect to the decomposition $P_1\cup\ldots\cup P_m$ if $\bomega$ does not lie in any facet-defining hyperplane for any $P_i$, $i \in[m]$.  
We further say that $\bomega$ is \emph{beyond} a facet $F$ of $P_i$ if the facet-defining hyperplane for $F$ separates $\bomega$ from the relative interior of $P_i$.  
Let $\Bey(P_i,\bomega)$ denote the collection of facets $F$ of $P_i$ for which $\bomega$ is beyond $F$.  
Then the \emph{half-open polytope} associated to $P_i$ and $\bomega$ is
$$
\mathbb{H}_\bomega P_i :=P_i\backslash\bigcup_{F\in\Bey(P_i,\bomega)}F.  
$$
\begin{lemma} 
\cite[Theorem 3]{KV08}
\label{lem: half-open decompositions}
Let $P$ be a lattice polytope and let $P = P_1\cup\cdots\cup P_m$ be a decomposition of $P$ into lattice polytopes.  
If $\bomega\in P$ is in general position with respect to $P_1\cup\cdots\cup P_m$ then $P$ is the disjoint union
$$
P = \mathbb{H}_\bomega P_1\sqcup\cdots\sqcup\mathbb{H}_\bomega P_m,
$$
and 
$$
h^\ast(P;z) = h^\ast( \mathbb{H}_\bomega P_1;z)+\cdots+h^\ast(\mathbb{H}_\bomega P_m;z). 
$$ 
\end{lemma}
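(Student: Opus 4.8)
The plan is to establish the half-open decomposition result in two parts, corresponding to the two displayed conclusions. The key geometric idea is that, given a point $\bomega$ in general position, each lattice point of $P$ should be assigned to exactly one of the half-open pieces $\mathbb{H}_\bomega P_i$, and moreover this assignment should be compatible with dilation so that it lifts to an identity of Ehrhart series (and hence of $h^\ast$-polynomials). First I would fix $\bomega$ in general position with respect to the decomposition $P = P_1 \cup \cdots \cup P_m$, and establish the set-theoretic disjoint union $P = \mathbb{H}_\bomega P_1 \sqcup \cdots \sqcup \mathbb{H}_\bomega P_m$. The natural argument is: for any point $\bx \in P$, consider those pieces $P_i$ whose (closed) body contains $\bx$. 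Among these, exactly one will contain $\bx$ in its half-open version, namely the piece for which $\bx$ lies on the ``$\bomega$-side'' of every relevant facet; the facets $F \in \Bey(P_i,\bomega)$ are precisely those removed, and the general-position hypothesis guarantees no ambiguity arises from $\bomega$ lying on a facet-defining hyperplane.

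Next I would upgrade this single-dilate statement to an identity of Ehrhart series. The point is that the decomposition $P = P_1 \cup \cdots \cup P_m$ dilates to a decomposition $tP = tP_1 \cup \cdots \cup tP_m$ for every $t \geq 1$, and the half-open structure is preserved under scaling: the facet $F$ of $P_i$ is beyond $\bomega$ if and only if $tF$ is beyond $t\bomega$, so $\mathbb{H}_{t\bomega}(tP_i) = t(\mathbb{H}_\bomega P_i)$. Since $t\bomega$ remains in general position, the disjointness from the first step applies at each dilate, giving
\begin{equation}
i(P;t) = \sum_{i=1}^m \bigl| t(\mathbb{H}_\bomega P_i) \cap \Z^d \bigr|
\label{equ:halfopencount}
\end{equation}
for all $t \geq 1$. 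Summing \eqref{equ:halfopencount} against $z^t$ over $t \geq 0$ then yields an identity of Ehrhart series, and clearing the common denominator $(1-z)^{d+1}$ converts this into the additive identity for the $h^\ast$-polynomials. Here I would rely on the standard fact that the Ehrhart series of a half-open lattice polytope is itself a rational function of the form $h^\ast(\mathbb{H}_\bomega P_i; z)/(1-z)^{d+1}$, so that reading off numerators is legitimate.

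I expect the main obstacle to be the careful bookkeeping at the boundaries in the first step, where one must verify that each $\bx \in P$ lands in exactly one half-open piece. The subtlety is that a point on a shared facet between $P_i$ and $P_j$ must be retained in precisely one of $\mathbb{H}_\bomega P_i$ or $\mathbb{H}_\bomega P_j$; the general-position hypothesis on $\bomega$ is exactly what rules out the degenerate cases (a point belonging to both or to neither), but making this airtight requires tracking which facets are deleted on each side of every internal wall and confirming the deletions are complementary. Once the disjoint-union statement is secured, the passage to the $h^\ast$-identity via \eqref{equ:halfopencount} and the dilation-compatibility of the half-open construction is essentially formal. This being a cited result of \cite{KV08}, it may suffice to reference their argument, but the sketch above records the structure one would supply if proving it directly.
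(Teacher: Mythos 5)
The paper never proves this lemma: it is quoted directly from \cite{KV08} (Theorem 3 there), so there is no internal argument to compare yours against. Measured against the standard proof (which is what the cited source supplies), your outline has the right architecture: (1) establish the pointwise disjoint union, (2) use the dilation compatibility $t\,(\mathbb{H}_\bomega P_i) = \mathbb{H}_{t\bomega}(tP_i)$ together with the fact that $t\bomega$ stays in general position to get $i(P;t) = \sum_i |t(\mathbb{H}_\bomega P_i)\cap\Z^n|$ for all $t\geq 1$, and (3) sum the series and read off numerators.

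The one genuine soft spot is the crux of step (1). You correctly identify existence-and-uniqueness as the main obstacle, but the route you propose for making it rigorous---tracking ``complementary deletions across every internal wall''---is the wrong bookkeeping: pairwise wall analysis breaks down for points lying on faces of codimension at least two that are shared by many pieces $P_i$, which is exactly where the danger of double-counting or omission lives. The clean argument is a nudge toward $\bomega$: for $\bx\in P$ and small $\varepsilon>0$, the point $\bx_\varepsilon := \bx + \varepsilon(\bomega-\bx)$ avoids every facet-defining hyperplane of every $P_j$ (this is precisely what general position of $\bomega$ buys, since the hyperplanes not containing $\bx$ are avoided for small $\varepsilon$ and those containing $\bx$ are avoided because $\bomega$ is off them), hence $\bx_\varepsilon$ lies in the interior of exactly one $P_i$. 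One then checks that $\bx\in\mathbb{H}_\bomega P_i$ for that $i$ (any facet of $P_i$ containing $\bx$ must have $\bomega$ on its interior side, so it is not in $\Bey(P_i,\bomega)$), and that $\bx\notin\mathbb{H}_\bomega P_j$ for $j\neq i$ (membership would force $\bx_\varepsilon\in\operatorname{int} P_j$, contradicting disjoint interiors). Two further points you use silently and should make explicit: first, that the counting function of a half-open lattice polytope has generating series of the form $h^\ast(\mathbb{H}_\bomega P_i;z)/(1-z)^{d+1}$ with nonnegative coefficients is itself a theorem (provable via half-open triangulations), not a formality; second, the constant terms match only because of a convention---the unique piece whose interior contains $\bomega$ is fully closed and contributes $h_0^\ast = 1$, while every other piece has at least one facet removed and satisfies $h_0^\ast = 0$, so $i(P;0)=1$ is accounted for exactly once.
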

The results of \cite{CFNP17} and Lemma~\ref{lem: half-open decompositions} allow us to identify the desired smooth lattice polytopes.  
In fact, the resulting examples have a very classical combinatorial flavor.
\begin{figure}
\centering
\includegraphics[width = 0.4\textwidth]{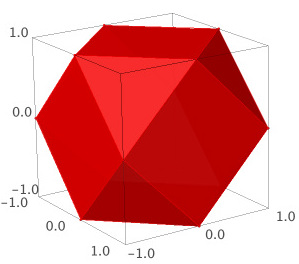}
\vspace{-0.2cm}
\caption{The chiseling of the cube $[-1,1]^3$ used in Theorem~\ref{thm: ehrhart unimodal but not ehrhart positive smooth polytopes}.  The same chiseling applied to $[-1,1]^d$ for $d\geq 7$ will fail to be Ehrhart positive but will still be Ehrhart unimodal.}
\label{fig: chiselling cube} 
\end{figure}
\begin{theorem}
\label{thm: ehrhart unimodal but not ehrhart positive smooth polytopes}
For $d\geq 7$, the chiseling $\ch([-1,1]^d,1)$ of the $d$-dimensional cube $[-1,1]^d$ is Ehrhart unimodal but not Ehrhart positive. 
\end{theorem}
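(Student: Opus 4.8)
The plan is to compute $h^\ast(\ch([-1,1]^d,1);z)$ exactly by writing the cube $[-1,1]^d$ as a half-open decomposition whose central piece is the chiseled polytope $Q := \ch([-1,1]^d,1)$, and then to deduce both properties from this single formula: Ehrhart unimodality because $h^\ast(Q;z)$ will differ from a known unimodal polynomial only in its linear coefficient, and the failure of Ehrhart positivity by converting $h^\ast(Q;z)$ into $i(Q;t)$ through \eqref{equ:h2e} and exhibiting a negative coefficient.

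First I would set up the decomposition. Chiseling $[-1,1]^d$ at distance $b=1$ removes, at each of its $2^d$ vertices $\bv$, the corner simplex $S_\bv := \conv(\bv,\bv+\bu_1,\ldots,\bv+\bu_d)$, where $\bu_1,\ldots,\bu_d$ are the primitive edge directions at $\bv$; each $S_\bv$ is a unimodular simplex, so $h^\ast(S_\bv;z)=1$. Because $b=1$ places the new vertices at the edge midpoints, distinct corner simplices meet only in lower-dimensional faces, and together with $Q$ they form a lattice decomposition $[-1,1]^d = Q\cup\bigcup_\bv S_\bv$. I would then choose $\bomega$ in the interior of $Q$. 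Every facet hyperplane of an $S_\bv$ is either a facet hyperplane of $Q$ (its cut facet) or a facet hyperplane of the cube, so such an $\bomega$ is automatically in general position, lies beyond no facet of $Q$, and lies beyond exactly the cut facet of each $S_\bv$. Hence $\mathbb{H}_\bomega Q = Q$, while each $\mathbb{H}_\bomega S_\bv$ is a unimodular simplex with a single facet removed, for which a direct lattice-point count gives $h^\ast(\mathbb{H}_\bomega S_\bv;z)=z$. Lemma~\ref{lem: half-open decompositions} then yields
\[
h^\ast([-1,1]^d;z) = h^\ast(Q;z) + 2^d z,
\qquad\text{so}\qquad
h^\ast(Q;z) = h^\ast([-1,1]^d;z) - 2^d z.
\]
I would record that $B_d(z):=h^\ast([-1,1]^d;z)$ is the type-B Eulerian polynomial, characterized by $\sum_{t\ge 0}(2t+1)^d z^t = B_d(z)/(1-z)^{d+1}$.

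For Ehrhart unimodality I would observe that $B_d(z)$ has nonnegative coefficients and is real-rooted, hence unimodal by the discussion in subsection~\ref{subsec: preliminaries}. Subtracting $2^d z$ changes only the linear coefficient, lowering it to $h_1^\ast(Q)=3^d-(d+1)-2^d$. Since $h_1^\ast(Q)\ge 1 = h_0^\ast$ for $d\ge 7$ and the peak of $B_d(z)$ lies strictly beyond position $1$, the chain $h_0^\ast\le h_1^\ast(Q)\le h_2^\ast(Q)\le\cdots$ up to the peak persists, so $h^\ast(Q;z)$ remains unimodal and $Q$ is Ehrhart unimodal.

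For the failure of Ehrhart positivity I would pass to the Ehrhart polynomial. Applying \eqref{equ:h2e} to $h^\ast(Q;z)=h^\ast([-1,1]^d;z)-2^d z$ gives
\[
i(Q;t) = (2t+1)^d - 2^d\binom{t+d-1}{d}.
\]
The coefficient of $t$ on the right-hand side equals $2d - 2^d/d$, which is negative precisely when $2d^2 < 2^d$, i.e.~for $d\ge 7$; thus $Q$ is not Ehrhart positive in this range (this negativity is also a consequence of \cite{CFNP17}). The main obstacle is the decomposition step: one must check carefully that at $b=1$ the removed corners are genuinely unimodular simplices meeting only in low-dimensional faces, and that an interior point of $Q$ lies beyond precisely the cut facet of each corner, so that each half-open corner contributes exactly $z$. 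Granting this, both conclusions reduce to the short computations above.
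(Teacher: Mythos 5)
Your proof is correct, and its core coincides with the paper's: the same decomposition of $[-1,1]^d$ into $\ch([-1,1]^d,1)$ and the $2^d$ unimodular corner simplices, the same application of Lemma~\ref{lem: half-open decompositions} (the paper takes $\bomega=\0$; you take an arbitrary interior point, which is equivalent), the same identity $h^\ast(\ch([-1,1]^d,1);z)=h^\ast([-1,1]^d;z)-2^dz$, and the same unimodality argument via real-rootedness of the type-B Eulerian polynomial \cite{B94}. The genuine difference is in the non-positivity half: the paper cites \cite[Proposition 1.3]{CFNP17} as a black box for the fact that the linear coefficient of $i(\ch([-1,1]^d,1);t)$ equals $2d-\frac{2^d}{d}$, whereas you derive this internally by feeding the $h^\ast$-identity into \eqref{equ:h2e}, obtaining $i(\ch([-1,1]^d,1);t)=(2t+1)^d-2^d\binom{t+d-1}{d}$ and reading off the linear coefficient $2d-\frac{2^d}{d}<0$ for $d\geq 7$. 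This buys self-containedness: both conclusions flow from the single decomposition identity, the only external inputs being Lemma~\ref{lem: half-open decompositions} and \cite{B94}, and the extra cost is negligible since the linear coefficient of $\binom{t+d-1}{d}$ is simply $\frac{1}{d}$. Two small points in your favor: your justification of general position (every facet hyperplane of a corner simplex is either a cut hyperplane or a cube facet hyperplane, all of which support facets of the chiseled polytope, so an interior point avoids them all) is sound, and your explicit remark that the peak of the type-B Eulerian polynomial lies strictly beyond position $1$ — needed so that lowering $h_1^\ast$ by $2^d$ preserves the unimodal chain $h_0^\ast\leq h_1^\ast\leq h_2^\ast\leq\cdots$ — is a detail the paper's terse final sentence glosses over.
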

\begin{proof}
It follows from the proof of \cite[Proposition 1.3]{CFNP17} that the linear term of $i(\ch([-1,1]^d,1);t)$ is 
$$
2d-\frac{2^d}{d},
$$
which is seen to be negative for $d\geq 7$.  
Therefore, we need only observe that the $h^\ast$-polynomial of $\ch([-1,1]^d,1)$ is unimodal.  
To see this, we first recall that the $h^\ast$-polynomial of the cube $[-1,1]^d$ is the \emph{Type B} Eulerian polynomial, which is well-known to be real-rooted and unimodal \cite{B94}.  
Recall from subsection~\ref{subsec: preliminaries} that the linear coefficient of the $h^\ast$-polynomial of a $d$-dimensional lattice polytope $P$ is always $|P\cap \Z|-(d+1)$.  
Thus, we know that
$$
[z].h^\ast([-1,1]^d;z)  = 3^d-(d+1),
$$
(Here, $[z^k].f(z)$ denotes the coefficient of $z^k$ in the polynomial $f(z).$)
Now, for each vertex $\bv$ of $[-1,1]^d$ with primitive edge directions $\bu_1,\ldots,\bu_d$, define the unimodular $d$-simplex $S_\bv :=\conv(\bv,\bv+\bu_1,\ldots,\bv+\bu_d)$.  
It follows that $[-1,1]^d$ admits the decomposition into lattice polytopes
$$
[-1,1]^d = \ch([-1,1]^d,1)\cup\bigcup_{\bv\in\Vert([-1,1]^d)}S_\bv,
$$
and the origin $\0\in\R^d$ is in general position with respect to this decomposition for all $d\geq2$.  
Thus, when $d\geq 2$, $\0$ is beyond no facet of $\ch([-1,1]^d,1)$, and for all $\bv\in\Vert([-1,1]^d)$, $\0$ is only beyond the facet $F_\bv$ of $S_\bv$ that does not contain $\bv$.  
In particular, $h^\ast(\mathbb{H}_\0\ch([-1,1]^d,1);z) = h^\ast(\ch([-1,1]^d,1);z)$, and for all $\bv$ in $\Vert([-1,1]^d)$ we have that $h^\ast(\mathbb{H}_\0S_\bv; z) = z$; i.e., the $h^\ast$-polynomial of the standard $d$-simplex with precisely one facet removed.  
It then follows from Lemma~\ref{lem: half-open decompositions} that
$$
h^\ast(\ch([-1,1]^d,1);z) = h^\ast([-1,1];z)-2^dz.
$$
Since $3^d-2^d-(d+1)>0$ for all $d\geq 7$ and $h^\ast([-1,1];z)$ is unimodal, it follows that $h^\ast(\ch([-1,1]^d,1);z)$ is also unimodal. 
Thus, for every dimension $d\geq 7$, $\ch([-1,1]^d,1)$ is Ehrhart unimodal but not Ehrhart positive.
\end{proof}

\subsection{$s$-Lecture hall simplices}
\label{subsec: s-lecture hall simplices}
A well-studied family of examples of Ehrhart unimodal lattice polytopes are the \emph{$\bs$-lecture hall simplices} \cite{SS12}.  
Let $\bs := (s_k)_{k=1}^{d}$ be a sequence of positive integers. 
The $d$-dimensional \emph{$\bs$-lecture hall simplex} is 
\[
P_d^\bs := \left\{\bx\in\R^d : 0\leq\frac{x_1}{s_1}\leq\frac{x_2}{s_2}\leq\cdots\leq\frac{x_d}{s_d}\leq 1\right\}\subset\R^d.
\]
The $h^\ast$-polynomial of $P_d^s$ is called the \emph{$\bs$-Eulerian polynomial}, and it enumerates 
the \emph{$\bs$-inversion sequences} 
\[
	\mathcal{J}_d^\bs :=\left\{\m\in\Z^d : 0\leq m_i<s_i\right\}
\]
by their number of \emph{$\bs$-ascents}; that is, the value
\[
	\asc_\bs(\m) := \left|\left\{i\in\{0,1,\ldots,d-1\} : \frac{m_i}{s_i}<\frac{m_{i+1}}{s_{i+1}}\right\}\right|,
\]
for $\m\in\mathcal{J}_d^\bs$, with the convention that $m_0 := 0$ and $s_0:=1$ \cite{SS12}.  
In other words,
\begin{equation}
\label{equ:hstar-slh}
h^\ast(P_d^\bs; z) = \sum_{\m\in \mathcal{J}_d^\bs} z^{\asc_\bs(\m)}.
\end{equation}
In the case that $\bs = (1,2,\ldots,d)$, the $\bs$-Eulerian polynomial is the classic \emph{Eulerian polynomial}, which enumerates the permutations of $[n]$ by the descent statistic.  
It was shown in \cite{SV15} that for all choices of $\bs$ and $d$, the $h^\ast$-polynomial of $P_d^\bs$ is real-rooted and therefore unimodal.  
The $\bs$-lecture hall simplices are a combinatorially rich family of lattice polytopes (see for example \cite{Sav16}), so it is natural to ask whether or not they are of interest from the perspective of Ehrhart positivity as well.  
In fact, as we see with the following theorem, there exist infinitely many $\bs$-lecture hall simplices, even in low dimensions, that are not Ehrhart positive.  
In the following, for positive integers $a, b, k_2$ and nonnegative integers $k_1, k_3$, we write $(1^{k_1}, a, 1^{k_2}, b, 1^{k_3})$ for the vector $(\underbrace{1, \dots, 1}_{k_1}, a, \underbrace{1, \dots, 1}_{k_2}, b, \underbrace{1, \dots, 1}_{k_3}).$

\begin{theorem} 
\label{thm: thin lecture hall simplices}
For every $d\geq3$, there exist $\bs$-lecture hall simplices of the form $P_d^{(1^{k_1}, a, 1^{k_2}, b, 1^{k_3})}$ that are not Ehrhart positive.  
\end{theorem}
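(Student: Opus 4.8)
The plan is to compute the $h^\ast$-polynomial of $P_d^\bs$ directly from the $\bs$-inversion-sequence description \eqref{equ:hstar-slh}, exploiting the fact that a vector of the form $\bs = (1^{k_1}, a, 1^{k_2}, b, 1^{k_3})$ consists almost entirely of $1$'s, and then to read off a low-order coefficient of the Ehrhart polynomial via \eqref{equ:h2e} and show it is negative for suitable parameters. The crucial observation is that an $\bs$-inversion sequence $\m\in\mathcal{J}_d^\bs$ must satisfy $m_i = 0$ at every coordinate where $s_i = 1$, since $0\le m_i < s_i = 1$ forces $m_i=0$. Hence such sequences are parameterized by only two free entries, $\alpha\in\{0,1,\dots,a-1\}$ and $\beta\in\{0,1,\dots,b-1\}$, sitting in the positions of $a$ and $b$.

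First I would restrict to the case $k_2\ge 1$, so that the two distinguished positions are separated by at least one coordinate equal to $1$. In this configuration the rescaled values $m_i/s_i$ vanish everywhere except at the two distinguished positions, where they equal $\alpha/a$ and $\beta/b$. Tracing through the definition of $\asc_\bs$ (recalling the convention $m_0=0$, $s_0=1$), one checks that the only candidate ascents are the step into the $a$-position and the step into the $b$-position, while every step \emph{out} of a distinguished position is a descent; thus $\asc_\bs(\m)$ simply counts how many of $\alpha,\beta$ are strictly positive. Grouping the $\bs$-inversion sequences by this statistic then yields
\[
h^\ast(P_d^\bs; z) = 1 + (a+b-2)z + (a-1)(b-1)z^2 = \bigl(1+(a-1)z\bigr)\bigl(1+(b-1)z\bigr),
\]
a degree-two polynomial that is independent of $d$ (and visibly real-rooted, consistent with known unimodality).

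With the $h^\ast$-polynomial in hand, I would substitute into \eqref{equ:h2e} and extract the coefficient of $t$ in $i(P_d^\bs;t)$. Writing $H_d := 1 + \tfrac12 + \cdots + \tfrac1d$ for the $d$-th harmonic number, a short computation of the coefficient of $t$ in each of $\binom{t+d}{d}$, $\binom{t+d-1}{d}$, $\binom{t+d-2}{d}$ gives
\[
H_d + \frac{a+b-2}{d} - \frac{(a-1)(b-1)}{d(d-1)}.
\]
Setting $a=b$ turns this into a downward-opening quadratic in $a$, namely $H_d + \tfrac{2(a-1)}{d} - \tfrac{(a-1)^2}{d(d-1)}$, which is negative once $a$ is large (already negative for $\bs=(7,1,7)$ when $d=3$, where $i(P_3^\bs;t)=\tfrac{49}{6}t^3+7t^2-\tfrac16 t+1$). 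Since the construction $\bs=(a,1^{d-2},a)$ is of the required form and available in every dimension $d\ge 3$ (taking $k_1=k_3=0$ and $k_2=d-2\ge 1$), and since infinitely many values of $a$ work, this produces infinitely many non-Ehrhart-positive $\bs$-lecture hall simplices of the stated shape in each such dimension.

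The main obstacle is bookkeeping rather than conceptual: the delicate point is the correct evaluation of $\asc_\bs$, where one must respect the boundary convention $m_0=0$ and the adjacency of the distinguished positions. If $k_2=0$ the two special positions are adjacent, the comparison $\alpha/a < \beta/b$ enters, and the $h^\ast$-polynomial takes a different, non-factoring form; restricting to $k_2\ge 1$ is exactly what keeps the computation clean. It is also worth confirming that negativity cannot come from the top of the polynomial: the leading coefficient is the volume $\vol(P_d^\bs)=ab/d!$ and the coefficient of $t^{d-1}$ is a positive multiple of the (normalized) surface area, so the linear coefficient is the natural place to look.
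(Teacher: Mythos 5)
Your proposal is correct and follows essentially the same route as the paper: you derive the factorization $h^\ast(P_d^{(1^{k_1},a,1^{k_2},b,1^{k_3})};z)=\bigl(1+(a-1)z\bigr)\bigl(1+(b-1)z\bigr)$ from the inversion-sequence description exactly as in the paper's Lemma~\ref{lem:hstar-a-1-b} (including the key restriction $k_2\geq 1$), and then conclude via formula~\eqref{equ:h2e} that the linear Ehrhart coefficient goes negative. The only cosmetic difference is that you compute that coefficient in closed form, $H_d+\tfrac{a+b-2}{d}-\tfrac{(a-1)(b-1)}{d(d-1)}$, and specialize $a=b$ large, whereas the paper fixes $a>d$ and shows the coefficient strictly decreases as $b$ increments; both finishes are immediate.
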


We will use the following lemma to prove the above theorem, which allows us to write $h^\ast(P_d^{(1^{k_1}, a, 1^{k_2}, b, 1^{k_3})};z)$ explicitly in terms of the parameters $a$ and $b$.
\begin{lemma}
\label{lem:hstar-a-1-b}
For any positive integers $a, b , k_2$, nonnegative integers $k_1, k_3$ and integer $d \ge 3,$ we have
\begin{equation}
	h^\ast(P_d^{(1^{k_1}, a, 1^{k_2}, b, 1^{k_3})}; z) = (1 + (a-1) z) (1 + (b-1) z).
	\label{equ:hstar-a-1-b}
\end{equation}
\end{lemma}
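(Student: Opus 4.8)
The plan is to evaluate the combinatorial formula \eqref{equ:hstar-slh} directly. Write $\bs = (1^{k_1}, a, 1^{k_2}, b, 1^{k_3})$ and let $p := k_1 + 1$ and $q := k_1 + k_2 + 2$ denote the two coordinates at which $s_p = a$ and $s_q = b$, respectively. The first observation is that the constraint $0 \le m_i < s_i$ forces $m_i = 0$ whenever $s_i = 1$. Hence every $\bs$-inversion sequence $\m \in \mathcal{J}_d^\bs$ has all of its coordinates equal to zero except possibly $m_p$ and $m_q$, and these range freely over $0 \le m_p \le a-1$ and $0 \le m_q \le b-1$. In other words, $\mathcal{J}_d^\bs$ is in bijection with the set of pairs $(m_p, m_q)$, so the sum in \eqref{equ:hstar-slh} is really a double sum over these two free coordinates.

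The key step is then to compute $\asc_\bs(\m)$ for such an $\m$. Including the convention $m_0/s_0 = 0$, the sequence of ratios $m_i/s_i$ equals $0$ at every index except $i = p$, where it is $m_p/a \ge 0$, and $i = q$, where it is $m_q/b \ge 0$. Comparisons between two consecutive zero ratios are never strict, so they contribute no ascent. Because $k_2 \ge 1$, we have $q - p \ge 2$, so the two (possibly) nonzero ratios are separated by at least one coordinate carrying ratio $0$ and are therefore never compared with one another. A strict increase $m_i/s_i < m_{i+1}/s_{i+1}$ can thus occur only at the transition from the zero immediately preceding $p$ up to $m_p/a$, and likewise into $q$; the transitions \emph{out} of $p$ and $q$ compare a nonnegative ratio down to $0$ and so are never ascents. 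This yields
\[
\asc_\bs(\m) = [\, m_p \ge 1 \,] + [\, m_q \ge 1 \,],
\]
where $[\,\cdot\,]$ denotes the Iverson bracket.

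Finally, substituting this into \eqref{equ:hstar-slh} and using that the exponent splits as a sum over the two independent coordinates, the generating function factors:
\[
h^\ast(P_d^\bs; z) = \Bigl(\sum_{m_p = 0}^{a-1} z^{[m_p \ge 1]}\Bigr)\Bigl(\sum_{m_q = 0}^{b-1} z^{[m_q \ge 1]}\Bigr) = \bigl(1 + (a-1)z\bigr)\bigl(1 + (b-1)z\bigr),
\]
which is exactly \eqref{equ:hstar-a-1-b}.

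I expect the only delicate point to be the ascent computation, and in particular making explicit the boundary conventions: the comparison at $i = 0$ when $k_1 = 0$ (handled by $m_0/s_0 = 0$) and the absence of an ``out'' comparison at $q$ when $k_3 = 0$ (the index $i$ only runs up to $d-1$). It is also worth flagging precisely where the hypothesis $k_2 \ge 1$ enters: it guarantees that the coordinates $p$ and $q$ are not adjacent, so that $m_p/a$ and $m_q/b$ are never directly compared. Were $k_2 = 0$, that comparison could create or destroy an ascent depending on the relative sizes of $m_p/a$ and $m_q/b$, and the clean factorization would fail.
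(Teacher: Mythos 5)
Your proof is correct and follows essentially the same route as the paper's: both evaluate the ascent generating function \eqref{equ:hstar-slh} directly, observe that the only possible ascents are the transitions into the two coordinates carrying $a$ and $b$ (transitions out, and comparisons among the forced zeros, never being ascents), and factor the resulting sum over the two independent free coordinates. The only organizational difference is that the paper first reduces to the case $\bs=(a,1^{d-2},b)$ and then computes, whereas you handle the general vector $(1^{k_1},a,1^{k_2},b,1^{k_3})$ in a single pass, making explicit where the hypothesis $k_2\geq 1$ enters (in the paper this is absorbed into the requirement $d\geq 3$ after the reduction).
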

\begin{proof}
	It follows from equation~(\ref{equ:hstar-slh}) that 
	\[
	h^\ast(P_d^{(1^{k_1}, a, 1^{k_2}, b, 1^{k_3})}; z)  = h^\ast(P_d^{(a, 1^{k_2}, b)}; z).  
	\]
	Thus it suffices to prove the statement for $\bs=(a,1^{d-2},b).$ 
	Suppose $\m\in\mathcal{J}_d^\bs$. 
	Then $m_1 \in \{0,1,\dots,a-1\}, m_d =\{0, 1, \dots, b-1\}$, and $m_i =0$ for $2 \le i \le d-1.$ 
	Notice that only $0$ and $d-1$ can be $\bs$-ascents. 
	Furthermore, $0$ is an $\bs$-ascent in $m$ if and only if $m_1 \neq 0,$ and $d-1$ is an $\bs$-ascent in $m$ if and only if $m_d \neq 0.$ 
	Therefore,
	\[ h^\ast(P_d^{(a, 1^{d-2},b)};z) = \sum_{\m\in \mathcal{J}_d^\bs} z^{\asc_\bs(\m)} = \sum_{m_1=0}^{a-1} z^{f(m_1)} \sum_{m_d=0}^{b-1} z^{f(m_d)},\]
	where $f(x) = 0$ if $x = 0$, and $f(x)=1$ if $x > 0.$ 
	We then see that the right-hand-side of the above equation becomes $(1 + (a-1) z)(1+(b-1)z).$
\end{proof}

The proof of Theorem~\ref{thm: thin lecture hall simplices} is then given as follows.  
\begin{proof}[Proof of Theorem~\ref{thm: thin lecture hall simplices}]
We will show that for any integer $a > d,$ the $\bs$-lecture hall simplex $P_d^{(1^{k_1}, a, 1^{k_2}, b, 1^{k_3})}$ is Ehrhart positive for sufficiently large integer $b.$
It follows from Lemma \ref{lem:hstar-a-1-b} and Formula \eqref{equ:h2e} that for any integers $a, b>1$
\[
	i(P_d^{(1^{k_1}, a, 1^{k_2}, b+1, 1^{k_3})};t) - i(P_d^{(1^{k_1}, a, 1^{k_2}, b, 1^{k_3})};t) = \binom{t+d-1}{d} + (a-1) \binom{t+d-2}{d},
\]
which is independent of the parameter $b$.  
Therefore, it suffices to show that the linear term of the above expression is always negative. However,
\[ [t].\left(  \binom{t+d-1}{d} + (a-1) \binom{t+d-2}{d}\right) = \frac{d-a}{d(d-1)},\]
which is negative if $a>d.$ This completes the proof.
\end{proof}


Since all $\bs$-lecture hall simplices in Theorem~\ref{thm: thin lecture hall simplices} have lattice width one, we can think of them as \emph{thin} $\bs$-lecture hall simplices.
In the special case of Theorem~\ref{thm: thin lecture hall simplices} when $d=3$, we can further use Lemma~\ref{lem:hstar-a-1-b} to explicitly compute the Ehrhart polynomial $i(P_3^{(a, 1, b)};t)$, from which we can identify a spectrahedral cone containing all lattice points $(a,b)\in\Z^2_{>0}$ such that $P_3^{(a, 1, b)}$ is not Ehrhart positive.  
\begin{corollary}
\label{thm: lecture hall simplices}
Suppose that $\bs = (a, 1, b)$.
Then the $3$-dimensional $\bs$-lecture hall simplex $P_3^\bs$ has Ehrhart polynomial
$$
i(P_3^\bs;t) = \left(\frac{ab}{6}\right)t^3 +\left(\frac{a+b}{2}\right)t^2+\left(\frac{6+3(a+b)-ab}{6}\right)t+ 1.
$$
In particular, all such $\bs$-lecture hall simplices that are not Ehrhart positive are given by the lattice points $(a,b)\in\Z^2_{>0}$ satisfying $6+3(a+b)-ab < 0$.  
\end{corollary}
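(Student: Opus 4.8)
The plan is to obtain the Corollary as a direct computation from Lemma~\ref{lem:hstar-a-1-b} and the conversion formula~\eqref{equ:h2e}; no further geometric input is required. First I would specialize Lemma~\ref{lem:hstar-a-1-b} to $d=3$ with $k_1=k_3=0$ and $k_2=1$, so that $\bs=(a,1,b)$ and
\[
h^\ast(P_3^\bs;z)=(1+(a-1)z)(1+(b-1)z)=1+(a+b-2)z+(a-1)(b-1)z^2 .
\]
Thus $h_0^\ast=1$, $h_1^\ast=a+b-2$, $h_2^\ast=(a-1)(b-1)$, and $h_3^\ast=0$.

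Next I would substitute these coefficients into~\eqref{equ:h2e} with $d=3$, the $h_3^\ast$-term dropping out, to get
\[
i(P_3^\bs;t)=\binom{t+3}{3}+(a+b-2)\binom{t+2}{3}+(a-1)(b-1)\binom{t+1}{3}.
\]
Expanding each binomial coefficient as a cubic in $t$ and collecting powers is routine: the coefficient of $t^3$ telescopes to $\tfrac{1}{6}\bigl(1+(a+b-2)+(a-1)(b-1)\bigr)=\tfrac{ab}{6}$, the coefficient of $t^2$ to $\tfrac{a+b}{2}$, the constant term to $1$, and the coefficient of $t$ to $\tfrac{6+3(a+b)-ab}{6}$. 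This yields the asserted formula for $i(P_3^\bs;t)$.

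For the positivity claim I would observe that for every $(a,b)\in\Z_{>0}^2$ the leading coefficient $\tfrac{ab}{6}$, the quadratic coefficient $\tfrac{a+b}{2}$, and the constant term $1$ are automatically positive; hence the \emph{only} coefficient that can fail to be positive is the linear one. Therefore $P_3^\bs$ is Ehrhart positive exactly when $6+3(a+b)-ab>0$, and it fails Ehrhart positivity precisely on the lattice points with $6+3(a+b)-ab<0$, as claimed. The single borderline locus $6+3(a+b)-ab=0$, equivalently $(a-3)(b-3)=15$, is the only point requiring care: it contributes finitely many parameters (such as $(a,b)=(6,8)$) at which the linear coefficient vanishes rather than being strictly negative.

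There is no substantial obstacle here; the content is algebraic bookkeeping together with a sign analysis, and the only subtlety is the treatment of the boundary case above in accordance with the strict inequality. To package the failure region as the promised spectrahedral cone, I would rewrite $6+3(a+b)-ab<0$ as $(a-3)(b-3)>15$ with $a,b>3$; this is the interior of the convex spectrahedron cut out by the linear matrix inequality $\bigl(\begin{smallmatrix} a-3 & \sqrt{15}\\ \sqrt{15} & b-3\end{smallmatrix}\bigr)\succeq 0$, whose homogenization is a spectrahedral cone containing exactly the non-Ehrhart-positive lattice points $(a,b)$.
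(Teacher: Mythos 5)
Your proposal is correct and takes essentially the same route as the paper, which derives the corollary exactly this way: specialize Lemma~\ref{lem:hstar-a-1-b} to $\bs=(a,1,b)$, convert the $h^\ast$-coefficients to the Ehrhart polynomial via \eqref{equ:h2e}, and observe that only the linear coefficient can fail to be positive. Your extra remark about the borderline locus $(a-3)(b-3)=15$, where the linear coefficient vanishes (so the simplex is not Ehrhart positive yet does not satisfy the strict inequality $6+3(a+b)-ab<0$), is a valid and slightly more careful reading than the paper's phrasing.
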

\begin{figure}
\centering
{\subfigure{\includegraphics[width = 0.25\textwidth]{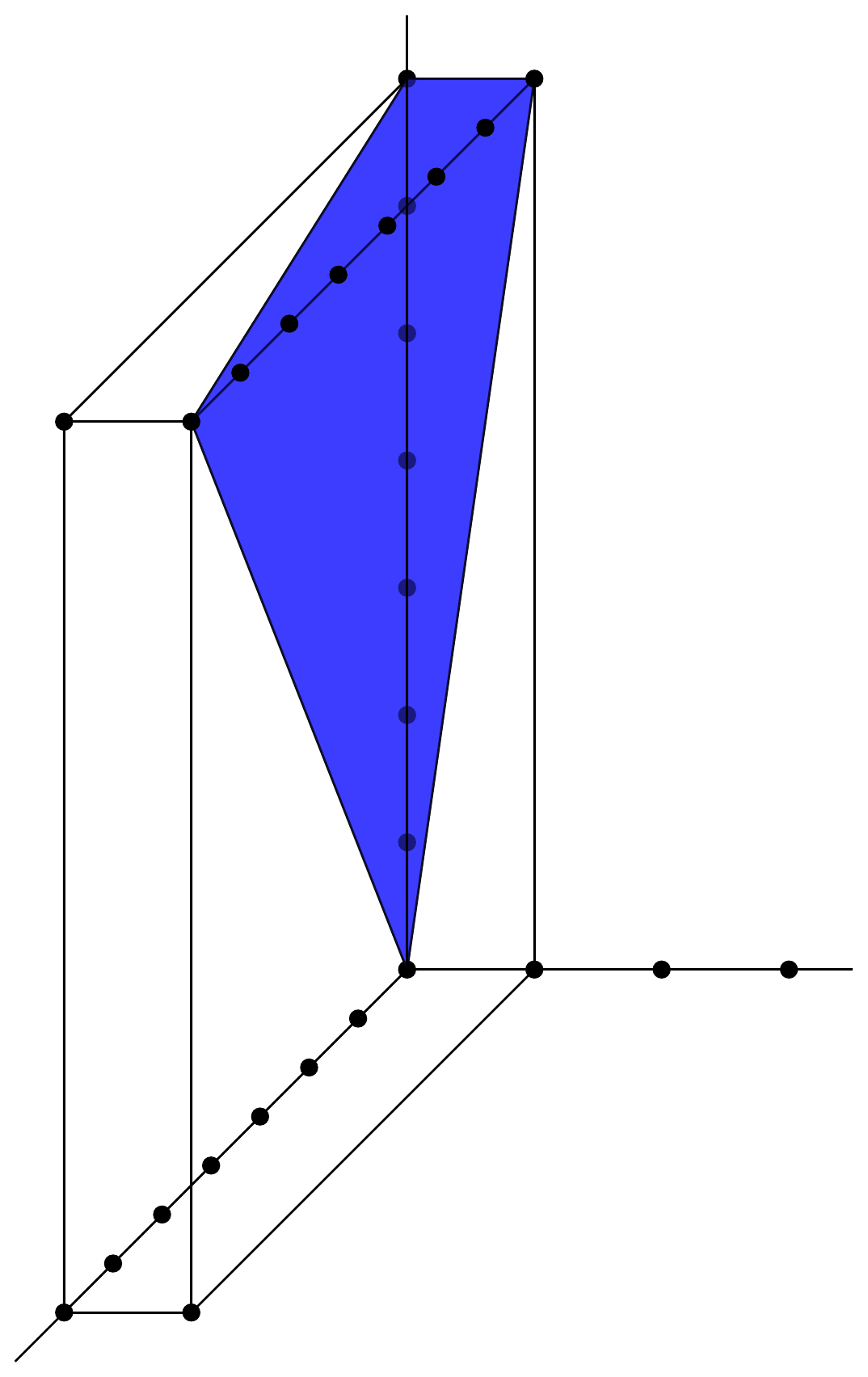}}\qquad\qquad
\subfigure{\includegraphics[width = 0.4\textwidth]{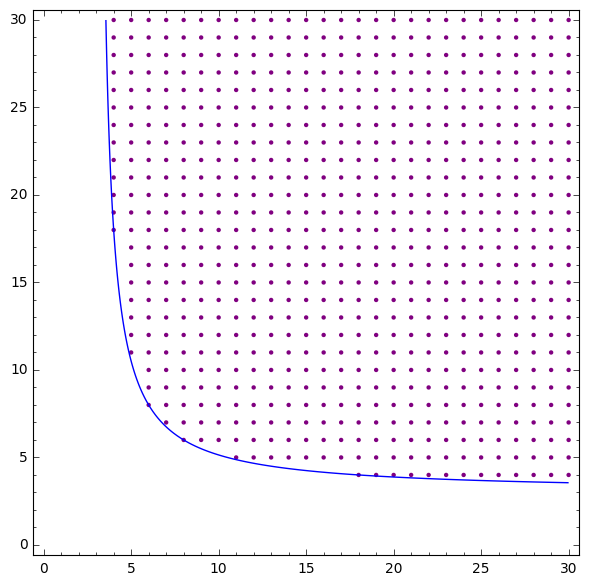}}}
\vspace{-0.2cm}
\caption{On the left we see the thin lecture hall simplex $P_3^{(7,1,7)}$, and on the right we see the cone of lattice points $(s_1,s_3)$ for which $P_3^{(s_1,1,s_3)}$ is not Ehrhart positive.}
\label{fig: lecture hall simplices} 
\end{figure}
Figure~\ref{fig: lecture hall simplices} depicts the spectrahedral cone in $\R^2$ defined by the linear term of $i(P_3^\bs;t)$ in Corollary~\ref{thm: lecture hall simplices} that captures the collection of lattice points $(a,b)$ yielding $P_3^\bs$ that are not Ehrhart positive.  
From this picture we can see that the vast majority of $\bs$-lecture hall $3$-simplices that are thin in the second coordinate are not Ehrhart positive. 
We also see from this corollary that the Ehrhart polynomials of these $\bs$-lecture $3$-simplices are similar to that of the Reeve's tetrahedra, a family of lattice $3$-simplices that were introduced in subsection~\ref{subsec: low dimensions}.
We end this section with a geometric remark that further connects the $\bs$-lecture hall $3$-simplices $P_3^{(a,1,b)}$ to the Reeve's tetrahedron.  
\begin{remark}
[Pyramids and $\bs$-lecture hall simplices]
\label{rmk: pyramids and lecture hall simplices}
Recall from Subsection~\ref{subsec: low dimensions} that if $\Pyr^k(P)$ is the $k$-fold lattice pyramid over a lattice polytope $P$ then $h^\ast(P;z) = h^\ast(\Pyr(P);z)$ \cite{B06}.  
Given an $\bs$-lecture hall simplex $P_d^{(s_1,\ldots,s_d)}$, notice that the $\bs$-lecture hall simplices $P_{d+1}^{(1,s_1,\ldots,s_d)}$ and $P_{d+1}^{(s_1,\ldots,s_d,1)}$ are lattice pyramids over $P_d^{(s_1,\ldots,s_d)}$.  
This gives an alternative method by which to observe that 
\[
h^\ast(P_d^{(1^{k_1}, a, 1^{k_2}, b, 1^{k_3})}; z)  = h^\ast(P_d^{(a, 1^{k_2}, b)}; z),
\]
a fact that we used in the proof of Theorem~\ref{thm: thin lecture hall simplices}.  
On the other hand, $P_d^{(a,1^{d-2},b)}$ is not a $(d-2)$-fold lattice pyramid over $P_2^{(a,b)}$, as can already be seen for $d = 3$ in the left-hand-side of Figure~\ref{fig: lecture hall simplices}.  
These examples demonstrate how lattice pyramids can be used to recover non-Ehrhart positive lattice polytopes in high dimensions with a chosen Ehrhart $h^\ast$-polynomial.  
Analogous to these $s$-lecture hall simplices, in the coming section, we will use the pyramid construction in relation to the Reeve's tetrahedra to derive our final collection of examples.  
\end{remark}




%

\section{Polytopes that are neither Ehrhart Positive nor Ehrhart Unimodal}
\label{sec: polytopes that are neither ehrhart positive nor ehrhart unimodal}


In this section we present a family of lattice polytopes containing polytopes in each dimension greater than two that are neither Ehrhart positive nor Ehrhart unimodal.  
Analogous to the previous sections, these polytopes also have a nice geometric construction that relies on fundamental examples and tools used frequently in polyhedral geometry and Ehrhart theory.  
Recall that a lattice simplex is called \emph{empty} if it contains no lattice points apart from its vertices.  
In the following, we show that there exist infinitely many empty lattice simplices in each dimension greater than two that are neither Ehrhart positive nor Ehrhart unimodal.  

\subsection{Empty simplices}
\label{subsec: empty simplices}
In dimension three there exists a well-known family of empty lattice simplices that are neither Ehrhart positive nor Ehrhart unimodal.  
This family is collectively known as the \emph{Reeve's tetrahedra}, which we introduced in subsection~\ref{subsec: low dimensions}.    
Recall from subsection~\ref{subsec: low dimensions} that the Reeve's tetrahedron $\mathcal{R}_h$ has $h^\ast$-polynomial 
$$
h^\ast(\RR_h;z) = 1+(h-1)z^2.  
$$
Thus, the Reeve's tetrahedron $\RR_h$ exhibits the special shape of the $h^\ast$-polynomial of empty simplices; namely, $h_1^\ast = 0$ for any empty lattice simplex.  
Moreover, any nonunimodular empty lattice $d$-simplex will not be Ehrhart unimodal.  
In addition, recalling that 
$$
i(\RR_h;t) = \frac{h}{6}t^3+t^2+\left(\frac{12-h}{6}\right)t+1,
$$
we see that for any $h\geq13$ the Reeve's tetrahedron $\RR_h$ will also not be Ehrhart positive.   
The following theorem shows that both phenomena can be lifted into higher dimensions using the techniques we applied to $\bs$-lecture hall simplices in subsection~\ref{subsec: s-lecture hall simplices}, and the pyramid construction defined in subsection~\ref{subsec: low dimensions}.    
 \begin{theorem}
 \label{thm: pyramids over reeves tetrahedron}
 For $d\geq 3$ let 
 \[
 H := \left\lceil\frac{1}{(d-2)!}{d+1 \brack 2}\right\rceil+1,
 \]
 where ${n \brack k}$ denotes the unsigned Stirling number of the first kind.  
 For all $h\geq H$ the empty lattice $d$-simplex
 \[
 \Pyr^{d-3}(\RR_h)
 \]
 is neither Ehrhart positive nor Ehrhart unimodal.  
 \end{theorem}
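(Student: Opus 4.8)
The plan is to treat the two failures separately, using the fact that each lattice pyramid fixes the $h^\ast$-polynomial while perturbing the Ehrhart polynomial in a controlled way. For Ehrhart non-unimodality, I would first invoke $h^\ast(\Pyr^{d-3}(\RR_h);z) = h^\ast(\RR_h;z) = 1 + (h-1)z^2$, since every lattice pyramid preserves the $h^\ast$-polynomial \cite{B06}. Because the Stirling number ${d+1 \brack 2}$ is positive, $H = \lceil \tfrac{1}{(d-2)!}{d+1 \brack 2}\rceil + 1 \geq 2$ for all $d\geq 3$, so any $h \geq H$ gives $h-1 \geq 1$. The coefficient sequence $(1,0,h-1)$ then decreases and afterward increases, so it has a strict interior valley and is not unimodal; hence $\Pyr^{d-3}(\RR_h)$ is not Ehrhart unimodal.

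For Ehrhart non-positivity the goal is to exhibit a single nonpositive Ehrhart coefficient, following the template of Theorem~\ref{thm: thin lecture hall simplices}. Combining the $h^\ast$-polynomial above with formula~\eqref{equ:h2e} yields
$$
i(\Pyr^{d-3}(\RR_h);t) = \binom{t+d}{d} + (h-1)\binom{t+d-2}{d}.
$$
I would then isolate the linear coefficient of each term. Writing $\binom{t+d}{d} = \tfrac{1}{d!}\prod_{i=1}^{d}(t+i)$ and using $\prod_{i=1}^{d}(t+i) = \sum_{j=0}^{d} {d+1 \brack j+1}\, t^j$, the linear coefficient of $\binom{t+d}{d}$ is $\tfrac{1}{d!}{d+1 \brack 2}$. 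For $\binom{t+d-2}{d} = \tfrac{1}{d!}(t-1)\,t\,(t+1)\cdots(t+d-2)$, the factor $t$ kills the constant term, and the linear coefficient is $\tfrac{1}{d!}$ times the product of the remaining factors at $t=0$, namely $\tfrac{-(d-2)!}{d!} = \tfrac{-1}{d(d-1)}$.

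Setting $X := \tfrac{1}{(d-2)!}{d+1 \brack 2}$ and using $\tfrac{1}{d!}{d+1 \brack 2} = \tfrac{X}{d(d-1)}$, the linear coefficient of $i(\Pyr^{d-3}(\RR_h);t)$ collapses to $\tfrac{X-(h-1)}{d(d-1)}$. For $h \geq H = \lceil X \rceil + 1$ we have $h-1 \geq \lceil X\rceil \geq X$, so this coefficient is $\leq 0$; as Ehrhart positivity demands every coefficient be strictly positive, $\Pyr^{d-3}(\RR_h)$ is not Ehrhart positive (and this recovers the known dimension-$3$ threshold, where $X = {4 \brack 2} = 11$, $H = 12$, and the coefficient $\tfrac{12-h}{6}$ is nonpositive exactly for $h \geq 12$). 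To confirm these simplices are empty, I would argue inductively: $\RR_h$ is empty, and a lattice pyramid over an empty simplex is empty since the only new lattice points lie in the base hyperplane or at the apex $\be_{n+1}$. The main obstacle is purely computational — carefully extracting the linear coefficient of $\binom{t+d-2}{d}$ and matching the linear coefficient of $\binom{t+d}{d}$ to the Stirling-number form, so that the two combine cleanly into the threshold $H$; the remaining steps are routine.
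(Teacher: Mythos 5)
Your proof is correct and takes essentially the same route as the paper: non-unimodality from the pyramid-invariance of the $h^\ast$-polynomial, and non-positivity by using \eqref{equ:h2e} to write $i(\Pyr^{d-3}(\RR_h);t)=\binom{t+d}{d}+(h-1)\binom{t+d-2}{d}$ and extracting its linear coefficient via the same Stirling identity $\frac{1}{d!}{d+1 \brack 2}=\sum_{k=1}^{d}\frac{1}{k}$ (the paper merely organizes this through the difference $i(\Pyr^{d-3}(\RR_{h+1});t)-i(\Pyr^{d-3}(\RR_h);t)$ before computing the same quantity). If anything, your treatment of the threshold is slightly more careful: when $X=\frac{1}{(d-2)!}{d+1 \brack 2}$ is an integer (e.g.\ $d=3$, $X=11$), the linear coefficient at $h=H$ is $0$ rather than strictly negative, and you correctly observe that this already contradicts Ehrhart positivity, whereas the paper only asserts strict negativity.
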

 
 \begin{proof}
 The proof of this theorem is another application of the techniques applied in subsection~\ref{subsec: s-lecture hall simplices}.  
 First, recall that for all $d-3\geq0$ the $(d-3)$-fold pyramid over $\RR_h$ satisfies
 $$
 h^\ast\left(\Pyr^{d-3}(\RR_h);z\right) = h^\ast\left(\RR_h;z\right) = 1+(h-1)z^2,
 $$
 and therefore $\Pyr^{d-3}(\RR_h)$ is not Ehrhart unimodal.  
 Again by \eqref{equ:h2e}, we have
 \[  i\left(\Pyr^{d-3}(\RR_h);t\right) = \binom{t+d}{d} + (h-1)\binom{t+d-2}{d}.\]
Hence,
\[ i\left(\Pyr^{d-3}(\RR_{h+1});t\right) -  i\left(\Pyr^{d-3}(\RR_h);t\right) = \binom{t+d-2}{d}.\]
Similar to the proof of  Theorem \ref{thm: thin lecture hall simplices}, we can show that the linear term of the above expression is always negative. 
In particular, we notice that
\[ 
[t]. \binom{t+d-2}{d} = -\frac{1}{d(d-1)} < 0.  
\]
It then follows that for some sufficiently large $\mathfrak{h}\in\Z_{>0}$, the lattice pyramid $\Pyr^{d-3}(\RR_h)$ is not Ehrhart positive if and only if $h\geq \mathfrak{h}$.  
Thus, it remains to prove that the value $\mathfrak{h}$ is indeed $H$, as defined above.  
To check this, notice that
\[
[t].i\left(\Pyr^{d-3}(\RR_1);t\right) = [t].{t+d\choose d} = \sum_{k=1}^d\frac{1}{k}. 
\]
So by the above argument we have that for all $h>1$,
\[
[t].i\left(\Pyr^{d-3}(\RR_h);t\right) = \sum_{k=1}^d\frac{1}{k} - \frac{h-1}{d(d-1)}.
\]
Thus, the linear term of $i\left(\Pyr^{d-3}(\RR_h);t\right)$ is negative whenever 
\[
\sum_{k=1}^d\frac{1}{k} - \frac{h-1}{d(d-1)}<0.
\]
Since, 
\[
\sum_{k=1}^d\frac{1}{k} = \frac{1}{d!}{d+1\brack2},
\]
this completes the proof.
 \end{proof}

\section{Final Remarks}
\label{sec: final remarks}
In this note, we examined the relationship between the properties of Ehrhart unimodality and Ehrhart positivity of lattice polytopes within each dimension greater than (or equal to) two. 
We focused on well-studied families of polytopes that were previously investigated with respect to one property but not the other.  
These families of polytopes included simplices of weighted projective spaces, smooth polytopes, $s$-lecture hall simplices, and empty lattice simplices arising as $k$-fold pyramids over the well-known Reeve's tetrahedron.  
Through this analysis, we showed that in each dimension greater than two there is no relationship between the properties of Ehrhart unimodality and Ehrhart postivity.  
That is, in each such dimension there exists a lattice polytope that is (1) both Ehrhart positive and Ehrhart unimodal, (2) Ehrhart positive but not Ehrhart unimodal, (3) Ehrhart unimodal but not Ehrhart positive, and (4) neither Ehrhart positive nor Ehrhart unimodal. 
These results provide new examples in regards to both Ehrhart unimodality and Ehrhart positivity for well-studied families of lattice polytopes, and at the same time make explicit the relationship (or lack thereof) between these two properties with respect to dimension.  

On the other hand, the results in this note do not exclude the possibility that there exist special families of polytopes for which there is some implication between Ehrhart unimodality and Ehrhart positivity.  
Such examples would be of general interest, since they would constitute a setting in which techniques for proving one property are utilizable in the analysis of the other.  
In this direction, one useful tool pointed out by the various examples in this paper that could pertain to such case analyses is the lattice pyramid operation.  
Suppose we are interested in analyzing the Ehrhart polynomials of a collection $\Omega$ of lattice polytopes.  
The examples presented here suggest that if $\Omega$ (or a subset thereof) is closed under lattice pyramids, then this operation can be used to identify members of $\Omega$ exhibiting both Ehrhart positivity and non-Ehrhart positivity.  
This purports the lattice pyramid operation not only as a useful tool in analyzing the shape of $h^\ast$-polynomials, but also in assessing the likely validity of conjectures on Ehrhart positivity for special families of polytopes.  


\smallskip

\noindent
{\bf Acknowledgements}.
Fu Liu was partially supported by a grant from the Simons Foundation \#426756. She was also supported by the National Science Foundation under Grant No. DMS-1440140 while she was in residence at the Mathematical Sciences Research Institute in Berkeley, California, during the Fall 2017 semester.
Liam Solus was supported by an NSF Mathematical Sciences Postdoctoral Research Fellowship (DMS - 1606407). 
The authors would like to thank Alexander Postnikov for posing the motivating question of this note.  
This research began at the 2017 MSRI Introductory Workshop to the Semester on Geometric and Topological Combinatorics and was continued at the 2017 Mini-Workshop on Lattice Polytopes at Oberwolfach.  
The authors are grateful to both institutions as well as the organizers of each event.

%
%
\end{document}